\newcommand{\Id} {\mathrm{Id}}
\newcommand{\Om} {\Omega}
\newcommand{\lb} {\lambda}
\newcommand{\C} {\mathbb{C}}
\newcommand{\R} {\mathbb{R}}
\renewcommand{\geq}{\geqslant}
\renewcommand{\leq}{\leqslant}
\newtheorem{theorem}{Theorem}[section]
\newtheorem{proposition}[theorem]{Proposition}
\newtheorem{corollary}[theorem]{Corollary}
\newtheorem{definition}[theorem]{Definition}
\newtheorem{lemma}[theorem]{Lemma}
\theoremstyle{definition}\newtheorem{remark}{Remark}
\numberwithin{equation}{section}
\title{On the controllability of quantum  transport in an electronic nanostructure}
\author{Florian M\'ehats\footnote{IRMAR, Univ. Rennes 1 and INRIA Team IPSO, Campus de Beaulieu, 35042 Rennes Cedex, France (\texttt{florian.mehats@univ-rennes1.fr}).}
	\and Yannick Privat\footnote{CNRS, Universit\'e Pierre et Marie Curie (Univ. Paris 6), UMR 7598, Laboratoire Jacques-Louis Lions, F-75005, Paris, France ({\tt yannick.privat@upmc.fr}).}
	\and Mario Sigalotti\footnote{INRIA Saclay-\^Ile-de-France, Team GECO, and CMAP, UMR 7641, \'Ecole Polytechnique, Route de Saclay, 91128 Palaiseau Cedex, France (\texttt{mario.sigalotti@inria.fr}).}
        }
\date{}
\begin{document}
\sloppy
\maketitle
\begin{abstract}
We investigate the controllability of quantum electrons trapped in a two-dimensional device, typically a MOS field-effect transistor. The problem is modeled by the Schr\"odinger equation in a bounded domain coupled to the Poisson equation for the electrical potential. The controller acts on the system through the boundary condition on the potential, on a part of the boundary modeling the gate. We prove that, generically with respect to the shape of the domain and boundary conditions on the gate, the device is controllable. 
We also consider control properties of a more realistic nonlinear version of the device, taking into account the self-consistent electrostatic Poisson potential. 
\end{abstract}
\noindent\textbf{Keywords:}  Schr\"odinger--Poisson system, quantum transport, nanostructures, controllability, genericity, shape deformation

\medskip

\noindent\textbf{AMS classification:} 35J10, 37C20, 47A55, 47A75, 93B05

\section{Introduction and main results}

In order to comply with the growing needs of ultra-fast, low-consumption and high-functionality operation, microelectronics industry has driven transistor sizes to the nanometer scale \cite{bastard, ferry,nanobook}. This has led to the possibility of building nanostructures like single electron transistors or single electron memories, which involve the transport of only a few electrons. In general, such devices consist in an active region (called the channel or the island) connecting two electrodes, known as the source and the drain, while the electrical potential in this active region can be tuned by a third electrode, the gate. In many applications, the performance of the device will depend on the possibility of controlling the electrons by acting on the gate voltage. 

At the nanometer scale, quantum effects such as interferences or tunneling become important and a quantum transport model is necessary. In this paper, we analyze the controllability of a simplified mathematical model of the quantum transport of electrons trapped in a two-dimensional device, typically a MOS field-effect transistor. The problem is modeled by a single Schr\"odinger equation, in a bounded domain $\Omega$ with homogeneous Dirichlet boundary conditions, coupled to the Poisson equation for the electrical potential. This work is a first step towards more realistic models. 
For instance, throughout the paper, the self-consistent  potential modeling interactions between electrons is either neglected, or (in the last section of the paper) considered as a small perturbation of the applied potential.

The control on this system is done through the boundary condition on the potential, on a part of the boundary modeling the gate. Degrees of freedom of the problem are the shape of the nanometric device and the position of the gate and its associated Dirichlet boundary conditions, modeling possible inhomogeneities: we prove that, generically  with respect to these degrees of freedom, the device is controllable.
We recall that Genericity is a measure of how frequently and robustly a property holds  with respect to some parameters. 

Controllability of general control-affine systems driven by the Schr\"odinger equation has been widely studied in the recent years.
The first positive controllability results for infinite-dimensional quantum systems have been established by local inversion theorems and the so-called \emph{return method} \cite{beauchard,beauchard-coron} (see also \cite{beauchard-laurent} for more recent results in this direction). 
Other results have been obtained by Lyapunov-function techniques and combinations with local inversion results \cite{beauchard-nersesyan,ito-k,Mirrahimi,nersesyan-morgan,Nersesyan,freresN} and by geometric control methods, using Galerkin or adiabatic  approximations \cite{Relax,metalemma,adiabatic-TAC,weakly-coupled,chambrion,CMSB}. 
Finally, let us conclude this necessarily incomplete list by mentioning that specific arguments have been developed to tackle physically relevant particular cases \cite{beauchard-bloch,ervedoza-puel,li-k}. 
Let us also recall that genericity of sufficient conditions for the controllability of the Schr\"odinger equation has been studied in \cite{mason-sigalotti,nersesyan-morgan,nersesyan-genericity,privat-sigalotti}.

Our analysis is based on the sufficient condition for approximate controllability obtained in \cite{Relax}, which requires a non-resonance condition on the spectrum of the internal Hamiltonian and a coupling property (the \textit{connectedness chain} property) on the external control field. Genericity is proved by global perturbations, exploiting the analytic dependence of the eigenpairs of the Schr\"odinger operator. 

\subsection{The quantum transport model}\label{sec:quantumTransport}

\subsubsection*{The unperturbed device}

Let us write a first model. In the following, $\Omega$ denotes a rectangle in the plane.

\vspace{3mm}

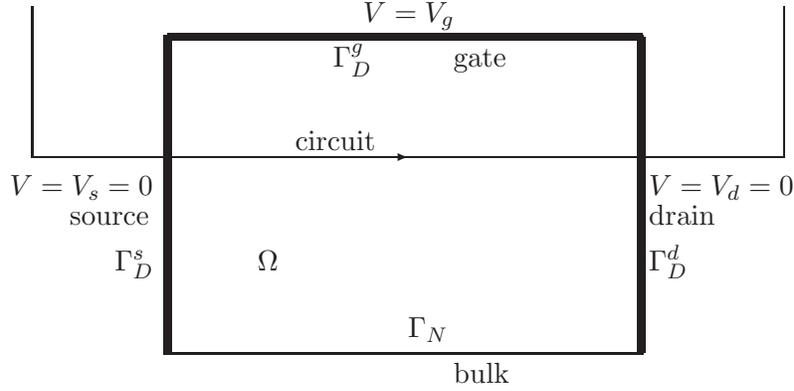
\begin{figure}[!ht]
 \begin{center}
\fbox{
\begin{picture}(6,4)
\put(1,1){$\Omega$}
\put(2.4,4.3){$V=V_g$}
\put(3.6,3.7){gate}
\put(-2.3,2){$V=V_s=0$}
\put(-1.5,1.6){source}
\put(6.2,2){$V=V_d=0$}
\put(6.2,1.6){drain}
\put(1.5,2.6){circuit}
\put(3.6,-0.5){bulk}
\put(2.0,3.7){$\Gamma_D^g$}
\put(-0.9,1){$\Gamma_D^s$}
\put(6.2,1){$\Gamma_D^d$}
\put(3,0.1){$\Gamma_N$}
\put(-2,2.5){\line(0,1){2}}
\put(-2,2.5){\vector(1,0){5}}
\put(3,2.5){\line(1,0){5}}
\put(8,2.5){\line(0,1){2}}
\linethickness{1mm}
\put(-0.2,-0.1){\line(0,1){4.2}}
\put(6.1,-0.1){\line(0,1){4.2}}
\put(-0.2,4.1){\line(1,0){6.3}}
\end{picture}}
\end{center}
\caption{Representation of the transistor}\label{figTransistor}
\end{figure}

 We assume without loss of generality that $\Omega=(0,\pi)\times (0,L)$ for some $L>0$, so that, with the notations of Figure \ref{figTransistor}, one has $\Gamma_D^s=\{0\}\times [0,L]$, $\Gamma_D^d=\{\pi\}\times [0,L]$, $\Gamma_N=[0,\pi]\times \{0\}$ and $ \Gamma_D^g=[0,\pi]\times \{L\}$. 
We set $\Gamma_D=\Gamma^s_D\cup \Gamma^d_D\cup \Gamma^g_D$.
In the whole paper, the notation $\frac{\partial}{\partial \nu}$ denotes the outward normal derivative.

We focus on the control problem
\begin{equation}\label{eq-schro1}
\left\{\begin{aligned}
i\partial_t\psi(t,x) &= -\Delta \psi(t,x) +V(t,x)\psi(t,x),\quad  && (t,x)\in  \mathbb{R}_+\times \Omega \\
-\Delta V(t,x) &= 0, && (t,x)\in  \mathbb{R}_+\times \Omega, \\
\psi (t,x)&=0, && (t,x)\in \mathbb{R}_+\times \partial \Omega,\\
V (t,x)&=\chi(x)V_g(t), && (t,x)\in \mathbb{R}_+\times \Gamma_D^g,\\
V (t,x)&=V_s=0, && (t,x)\in \mathbb{R}_+\times \Gamma_D^s, \\
V (t,x)&=V_d=0, && (t,x)\in \mathbb{R}_+\times   \Gamma_D^d,\\
\frac{\partial V}{\partial \nu }(t,x)&=0, && (t,x)\in \mathbb{R}_+\times \Gamma_N. 
\end{aligned}\right.
\end{equation}
The factor $\chi$ is an approximation of the constant function $\mathbb{1}_{Q(\Gamma^g_D)}$ that models spatial inhomogeneities, and is assumed to belong to
$$
\mathcal{C}_{0}^1(\Gamma_{D}^g)=\{\chi\in \mathcal{C}^1(\Gamma_{D}^g) \, : \, \chi(0,L)=\chi(\pi,L)=0\}.
$$
The vanishing condition on the boundary of the gate guarantees the continuity of the Dirichlet condition in the equation for $V$.  

Here, $\psi$ is the wave function of the electrons, satisfying the Schr\"odinger equation with the potential $V$. This potential solves the Poisson equation with a vanishing right-hand side, which means that we neglect the self-consistent electrostatic effects. In Section \ref{s:nonlinear}, as a generalization, we incorporate the self-consistent potential in the model as a perturbation of the applied potential $V$.

Let us comment on the boundary conditions. The wavefunction $\psi$ is subject to homogeneous Dirichlet boundary conditions, modelling the fact that the electrons are trapped in the device. For the potential $V$, the only nontrivial boundary condition is taken at the upper side of the rectangle $\Gamma_D^g$ where the gate is located. The applied grid voltage $t\mapsto V_g(t)$, with values in $[0,\delta]$ for some $\delta>0$ fixed throughout the paper, is seen as a \emph{control}, in the sense that the evolution of the system can be driven by its choice. At the source  and drain contacts $\Gamma_D^s$ and $\Gamma_D^d$, we impose homogeneous Dirichlet boundary conditions: we assume indeed for simplicity that $V_s=V_d=0$, the goal of this paper being to study the possibility of controlling by the gate. Finally, a Neumann boundary condition is imposed at the lower side $\Gamma_N$ of the rectangle, assumed in contact with the bulk where electrical neutrality holds.

The existence and uniqueness of mild solutions of \eqref{eq-schro1} in $\mathcal{C}^0(\R,L^2(\Omega,\C))$ for 
$V_g$ in $L^\infty(\R,[0,\delta])$ is then a consequence of general results for semilinear equations (see, for instance \cite{BMS} or \cite{pazy-book}).

\subsubsection*{Problem with shape inhomogeneities}

The problem above can be seen as an idealization, in the sense that the shape of the device is assumed to be perfectly rectangular.

Irregularities and inhomogeneities can be introduced in the model as follows. Let $Q$ be in
$$\mathrm{Diff}^1_0=\{Q:\R^2\to \R^2\mid Q \mbox{  orientation-preserving $\mathcal{C}^1$-diffeomorphism}\}$$
and $\chi$ be a function in 
\begin{equation}
\label{C20}
\mathcal{C}_0^1(Q(\Gamma^g_D))=\{\chi\in \mathcal{C}^1(Q(\Gamma^g_D))\,:\,\chi(Q(0,L))=\chi(Q(\pi,L))=0\}.
\end{equation}
Replacing $\Omega$ by $Q(\Omega)$, the resulting system writes
\begin{equation}\label{Qschro1}
\left\{\begin{aligned}
i\partial_t\psi(t,x) &= -\Delta \psi(t,x) +V(t,x)\psi(t,x),\quad  && (t,x)\in  \mathbb{R}_+\times Q(\Omega), \\
-\Delta V(t,x) &= 0, && (t,x)\in  \mathbb{R}_+\times Q(\Omega), \\
\psi (t,x)&=0, && (t,x)\in \mathbb{R}_+\times Q(\partial \Omega),\\
V (t,x)&=V_g(t) \chi(x), && (t,x)\in \mathbb{R}_+\times Q(\Gamma_D^g),\\
V (t,x)&=0, && (t,x)\in \mathbb{R}_+\times Q(\Gamma_D^s\cup \Gamma_D^d),\\
\frac{\partial V}{\partial \nu }(t,x)&=0, && (t,x)\in \mathbb{R}_+\times Q(\Gamma_N). 
\end{aligned}\right.
\end{equation}
We clearly have  $V(t,x)=V_g(t) V^{Q,\chi}_0(x)$ where $V_0^{Q,\chi}$ solves
\begin{equation}\label{V0t}
\left\{\begin{aligned}
-\Delta V_{0}^{Q,\chi}(x) &=0,\qquad && x\in Q(\Omega)\\
V_{0}^{Q,\chi}(x)&=\chi, && x\in Q(\Gamma_D^g)\\
V_{0}^{Q,\chi} (x)&=0, && x\in Q(\Gamma_D^s\cup \Gamma_D^d)\\
\frac{\partial V_{0}^{Q,\chi}}{\partial \nu }(x)&=0, && x\in Q(\Gamma_N).
\end{aligned}
\right.
\end{equation}
As for the unperturbed system, mild solutions of \eqref{Qschro1} in $\mathcal{C}^0(\R,L^2(Q(\Omega,\C))$ exist and are unique  for 
$V_g$ in $L^\infty(\R,[0,\delta])$.

\subsection{Action of the grid voltage on the system}

\subsubsection*{A control approach}

Our aim is to understand to what extent the system can be manipulated through the grid voltage. In this perspective, the time-varying parameter $V_g(\cdot)$ is seen as a control law and the objective is to characterize the controllability properties of the resulting system. 

\begin{definition}
We say that the control system  \eqref{Qschro1} is \emph{approximately controllable} if, 
for every $\psi_0,\psi_1\in L^2(Q(\Omega),\C)$ with unit norm and every $\varepsilon>0$, there exist a positive time $T$ and a control $V_g\in L^\infty([0,T],[0,\delta])$  such that 
the solution $\psi$ of  \eqref{Qschro1} with initial condition  $\psi(0)=\psi_0$ satisfies $\|\psi(T)-\psi_1\|_{L^2(Q(\Omega))}<\varepsilon$. 
\end{definition}

Notice that, for  quantum control systems with bounded control operators, exact controllability\footnote{System  \eqref{Qschro1}  would be \emph{exactly controllable} if
for every $\psi_0,\psi_1\in L^2(Q(\Omega),\C)$ with unit norm, there existed a positive time $T$ and a control $V_g\in L^\infty([0,T],[0,\delta])$  such that $\psi(T)=\psi_1$, where 
 $\psi$ denotes the solution  of  \eqref{Qschro1} corresponding to $V_g$ with initial condition $\psi(0)=\psi_0$. 
}  cannot be expected (see \cite{BMS,turinici}). This justifies our 
choice of approximate controllability as a notion of arbitrary maneuverability of the system.  
Other possible notions of controllability considered in the literature are exact controllability between smooth enough wavefunctions (see \cite{beauchard-coron,beauchard-laurent}) or exact controllability in infinite time (see \cite{freresN}).

The issue of determining whether \eqref{Qschro1} is approximately controllable for a given pair $(Q,\chi)$ seems a 
difficult task in general, since the known sufficient criteria for approximate controllability require a fine knowledge of the spectral properties of the operators involved (see Section \ref{s-general}).
Instead, our main goal is to 
study the controllability properties of the model which hold true \emph{generically} with respect to the diffeomorphism $Q$ and the boundary condition $\chi$. 
Genericity is a measure of how often and with which degree of robustness a property holds. 
More precisely, a property described by a boolean function $P:X\to \{0,1\}$ is said to be \textit{generic} in a Baire space $X$ if there exists a residual set\footnote{i.e. the intersection of countably many open and dense subsets.} $Y\subset X$ such that every $x$ in $Y$ satisfies the property $P$, that is, $P(x)=1$. Recall that a residual set is in particular dense in $X$.

\subsubsection*{Genericity results with respect to $\chi$ and $(Q,\chi)$}\label{enonces}

We are now ready to state our two main results. First consider the problems of the form \eqref{Qschro1} where $Q=\Id$, for which the genericity of the controllability is considered only with respect to variations of the boundary condition $\chi$ on the grid $\Gamma_D^g$. We allow $\chi$ to vary within the class  
$\mathcal{C}_0^1(\Gamma^g_D)$ defined in \eqref{C20}, whose metric is complete, making it a Baire space.

We have the following genericity result. 
\begin{theorem}\label{grille-tangent}
Let $L^2\not\in \pi^2\mathbb{Q}$.
For $Q=\Id$ and a generic $\chi$ in  $\mathcal{C}_0^1(\Gamma^g_D)$, the control problem \eqref{Qschro1} is approximately controllable.
\end{theorem}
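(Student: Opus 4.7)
My plan is to verify the hypotheses of the approximate-controllability criterion recalled in \cite{Relax}: a non-resonance property of the spectrum of the internal Hamiltonian and a ``connectedness chain'' property for the control operator. The system \eqref{Qschro1} with $Q=\Id$ writes as $i\partial_t\psi=(H_0+V_g(t)B_\chi)\psi$, with $H_0=-\Delta$ under Dirichlet boundary conditions on $\Omega=(0,\pi)\times(0,L)$ and $B_\chi$ the multiplication operator by the harmonic extension $V_0^{\Id,\chi}$ defined in \eqref{V0t}. Since $V_g$ varies in $[0,\delta]$, I re-center by writing $V_g=\tfrac{\delta}{2}+u$, so that the drift is $H_0^\chi:=H_0+\tfrac{\delta}{2}B_\chi$ and the control operator is $B_\chi$; the map $\chi\mapsto B_\chi$ is bounded and linear from $\mathcal{C}_0^1(\Gamma_D^g)$ into $L^\infty(\Omega)$, hence $\chi\mapsto H_0^\chi$ is analytic, as is---by Kato--Rellich---each isolated eigenpair.

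At $\chi=0$, the eigenpairs of $H_0$ are explicit: $\lambda_{j,k}=j^2+k^2\pi^2/L^2$ and $\phi_{j,k}(x,y)\propto\sin(jx)\sin(k\pi y/L)$, $(j,k)\in(\mathbb{N}^*)^2$. The assumption $L^2\notin\pi^2\mathbb{Q}$ makes $\pi^2/L^2$ irrational, from which a short $\mathbb{Q}$-linear argument gives simplicity of the $\lambda_{j,k}$ and the decoupling
\begin{equation*}
\lambda_{j,k}-\lambda_{j',k'}=\lambda_{a,b}-\lambda_{a',b'}\iff j^2-j'^2=a^2-a'^2\text{ and }k^2-k'^2=b^2-b'^2.
\end{equation*}
Separation of variables in \eqref{V0t} yields the expansion
\begin{equation*}
V_0^{\Id,\chi}(x,y)=\sum_{m\ge1}c_m(\chi)\sin(mx)\frac{\cosh(my)}{\cosh(mL)},\quad c_m(\chi)=\tfrac{2}{\pi}\!\int_0^\pi\!\chi(s)\sin(ms)\,ds,
\end{equation*}
so each coupling $M_{(j,k),(j',k')}(\chi):=\langle B_\chi\phi_{j,k},\phi_{j',k'}\rangle$ is a bounded linear functional of $\chi$, equal to $\sum_m c_m(\chi)\alpha^m_{j,j'}\beta^m_{k,k'}$ with explicit Fourier/hyperbolic coefficients. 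Elementary parity arguments give, for every $(j,k)\neq(1,1)$, an index $m_0$ with $\alpha^{m_0}_{1,j}\beta^{m_0}_{1,k}\neq0$, so $M_{(1,1),(j,k)}$ is a nonzero continuous linear functional on $\mathcal{C}_0^1(\Gamma_D^g)$.

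I take as connectedness chain the star graph $S=\{\{(1,1),(j,k)\}:(j,k)\ne(1,1)\}$, obviously connected on $(\mathbb{N}^*)^2$. For each $e\in S$, the set $\{\chi:M_e(\chi)\ne0\}$ is the complement of the kernel of a nonzero continuous linear functional on a Banach space, hence open and dense in $\mathcal{C}_0^1(\Gamma_D^g)$. The decoupling above restricts the possible gap coincidences to finitely many competitors per edge, and the first-order perturbation $\lambda_n(\chi)\approx\lambda_n+\tfrac{\delta}{2}\langle B_\chi\phi_n,\phi_n\rangle+O(\|\chi\|^2)$ provides, in the sense of analytic perturbation theory, explicit nonzero linear functionals of $\chi$ that split each coincidence as soon as they are not collinear. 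A countable intersection of these open dense conditions yields a residual set of $\chi$ realizing simultaneously the non-resonance of the spectrum of $H_0^\chi$ and the non-vanishing of $M_e(\chi)$ along the chain, and \cite{Relax} delivers approximate controllability.

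The main obstacle I expect is the handling of gap coincidences. The unperturbed Laplacian genuinely has non-unique differences (e.g.\ $7^2-1^2=8^2-4^2=13^2-11^2=48$), so non-resonance must be created by the perturbation $\tfrac{\delta}{2}B_\chi$. Carrying this out requires showing that, for each coinciding pair of transitions, the first-order perturbations of the gaps---both linear functionals of $\chi$---are not collinear; this reduces to verifying linear independence of families of the explicit coefficients $\alpha^m_{j,j'}\beta^m_{k,k'}$ as functions of $m$, which is the technical core of the argument.
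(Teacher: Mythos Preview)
Your overall architecture matches the paper's: shift the control to create a perturbed drift $H_0+\rho B_\chi$, then verify the hypotheses of the criterion from \cite{Relax} by showing that (i) the spectrum of the drift is weakly non-resonant and (ii) the control operator admits a connectedness chain, both generically in $\chi$. The paper does this by exhibiting two explicit test functions $\chi_n(x_1,L)=\cosh(nL)\sin(nx_1)$---with $n$ odd for non-resonance and $n$ even for the chain---and then propagating by analyticity; you instead try to show directly that each relevant condition defines an open dense subset of $\mathcal{C}^1_0(\Gamma_D^g)$. Your connectedness argument via the star graph is fine.

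There are, however, two genuine problems on the non-resonance side. First, your claim that ``the decoupling restricts the possible gap coincidences to finitely many competitors per edge'' is false. For the edge $\{(1,1),(1,k)\}$ with $k>1$, the decoupling forces $a=a'$ (arbitrary) together with finitely many $(b,b')$, so there are \emph{infinitely} many competitors $((a,b),(a,b'))$. This is not fatal---countably many open dense conditions still intersect residually---but the argument must be rewritten accordingly.

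Second, and more seriously, the ``technical core'' you isolate is precisely the hard part of the paper, and you have not carried it out. You need to show that for every nontrivial coincidence $\lambda_{\mathbf j}-\lambda_{\mathbf k}=\lambda_{\mathbf j'}-\lambda_{\mathbf k'}$, the linear functional
\[
\chi\;\longmapsto\;\int_\Omega V_0^{\Id,\chi}\bigl(\phi_{\mathbf j}^2-\phi_{\mathbf k}^2-\phi_{\mathbf j'}^2+\phi_{\mathbf k'}^2\bigr)
\]
is not identically zero. In the paper this is done by evaluating at $\chi_n$ with $n$ odd (so that $V_0^{\Id,\chi_n}(x)=\sin(nx_1)\cosh(nx_2)$) and showing, by a delicate case analysis, that the resulting algebraic identity forces $(\mathbf j,\mathbf k)=(\mathbf j',\mathbf k')$ or $(\mathbf k,\mathbf k')=(\mathbf j,\mathbf j')$. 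The argument is not a routine linear-independence check: it hinges on the \emph{transcendence of $2\pi$} (to kill a polynomial identity in $(2\pi)^2$ term by term) together with a lemma on rational functions with prescribed poles. Your reduction to ``linear independence of families $\alpha^m_{j,j'}\beta^m_{k,k'}$ as functions of $m$'' is in the right direction---indeed a single odd $m$ suffices---but nothing in your proposal indicates how to verify it, and the verification is where essentially all the work lies.
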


Consider now the entire class of problems of the form \eqref{Qschro1}. In order to endow it with  a topological structure, 
we identify  \eqref{Qschro1} with the triple  $(Q(\Omega),Q(\Gamma_D^g),\chi)$. 
The family of problems is then given by  
\begin{equation}\label{def-P}
\mathcal{P}=\{(Q(\Omega),Q(\Gamma_D^g),\chi)\mid 
(Q(\Omega),Q(\Gamma_D^g))\in \Sigma\quad \mbox{and}\quad 
\chi\in \mathcal{C}_0^1(Q(\Gamma^g_D))\},
\end{equation}
where
$$\Sigma=\{{
(Q(\Omega),Q(\Gamma_D^g))}\mid Q\in \mathrm{Diff}^1_0\}.$$ 
The metric induced by that of $\mathcal{C}^1$-diffeomorphisms 
and by the $\mathcal C^1$ topology on $\mathcal{C}^1_0(Q(\Gamma^g_D))$ 
makes $\mathcal{P}$ complete
 (\cite{micheletti}). 
In particular, $\mathcal{P}$ is a Baire space.

\begin{theorem}\label{grille-deformation}
For a generic element of $\mathcal{P}$, the control problem \eqref{Qschro1} is approximately controllable.
\end{theorem}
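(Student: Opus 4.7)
The plan is to verify the hypotheses of the sufficient criterion of \cite{Relax} generically on $\mathcal{P}$: approximate controllability follows from (i) a non-resonance condition on the spectrum of $-\Delta$ on $Q(\Omega)$ with the mixed boundary conditions inherited from $Q(\Gamma_D)$ and $Q(\Gamma_N)$, together with (ii) a connectedness chain property for the coupling operator, i.e. multiplication by $V_0^{Q,\chi}$, expressed in the eigenbasis. I would thus write the controllability set as a countable intersection $\mathcal{P}_\mathrm{ctrl}=\bigcap_n\mathcal{O}_n$, with each $\mathcal{O}_n$ encoding either one non-resonance inequality of the form $\lambda_i+\lambda_j\neq\lambda_k+\lambda_l$ or one non-vanishing matrix element $\int V_0^{Q,\chi}\phi_i^Q\overline{\phi_j^Q}\neq 0$ needed for the chain, and then show that each $\mathcal{O}_n$ is open and dense in the Baire space $\mathcal{P}$.

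For the non-resonance conditions, which depend only on $Q$, the key step is to pull the problem back to the fixed domain $\Omega$ via $Q$, producing an elliptic operator with $Q$-dependent coefficients in divergence form. Along any real-analytic path of diffeomorphisms $t\mapsto Q_t$ these coefficients are analytic in $t$, so by Kato's analytic perturbation theory the eigenvalues can be followed as analytic functions of $t$ away from a discrete set of crossings. Since $\Sigma$ contains, as a dense subset, the rectangles $(0,\pi)\times(0,L)$ reachable via affine $Q$, and since a rectangle with $L^2\notin\pi^2\mathbb{Q}$ satisfies all non-resonance conditions by the explicit spectral computation already used in Theorem~\ref{grille-tangent}, any fixed resonance equality is broken along an analytic path joining such a rectangle to any prescribed $Q$; hence its zero set is closed with empty interior, giving the required density for the corresponding $\mathcal{O}_n$.

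For the connectedness chain, I would fix $Q$ satisfying non-resonance and use that $V_0^{Q,\chi}$ depends linearly and continuously on $\chi$. Each matrix element $b_{ij}(Q,\chi)=\int_{Q(\Omega)}V_0^{Q,\chi}\phi_i^Q\overline{\phi_j^Q}\,dx$ is a bounded linear functional of $\chi$, and Green's identity combined with the Dirichlet boundary condition on $\phi_i^Q$ rewrites it as a boundary integral over $Q(\Gamma_D^g)$ involving $\chi$ and $\partial_\nu(\phi_i^Q\overline{\phi_j^Q})$. Unique continuation for the Dirichlet Laplacian forbids this boundary factor from vanishing identically, so the hyperplane $\{\chi : b_{ij}(Q,\chi)=0\}$ is proper in $\mathcal{C}_0^1(Q(\Gamma_D^g))$ and its complement is open and dense fiberwise; joint continuity of eigenfunctions and of $V_0^{Q,\chi}$ with respect to $(Q,\chi)$, established from the pulled-back formulation, upgrades this fiberwise density to density of the corresponding open subset of $\mathcal{P}$.

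The main obstacle is the joint continuity and labeling of eigenpairs as $Q$ varies, since eigenvalue multiplicities or crossings would both destroy the linear independence used to set up the connectedness graph and obstruct analytic continuation along paths. This is handled by first restricting to the open subset of $\Sigma$ on which the relevant eigenvalues are simple, which is itself open and dense by the non-resonance analysis, and then working on local patches where an analytic labeling is available. Once these technical ingredients are in place, the Baire property of $\mathcal{P}$ assembles the open dense conditions $\mathcal{O}_n$ into the residual set on which the criterion of \cite{Relax} applies.
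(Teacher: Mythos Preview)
Your overall architecture (write the controllability set as a countable intersection of open conditions, prove density by analytic propagation) matches the paper's, but there is a genuine gap in the non-resonance step.

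You assert that a rectangle $(0,\pi)\times(0,L)$ with $L^2\notin\pi^2\mathbb{Q}$ ``satisfies all non-resonance conditions by the explicit spectral computation already used in Theorem~\ref{grille-tangent}.'' This is false. The Dirichlet spectrum on such a rectangle is $\{j_1^2+j_2^2\pi^2/L^2\}$, and identities like
\[
\lambda_{(1,1)}-\lambda_{(1,2)}=\lambda_{(2,1)}-\lambda_{(2,2)}=-3\pi^2/L^2
\]
hold for every $L$, so even the weakest non-resonance condition $\lambda_{s_1}-\lambda_{s_2}\neq\lambda_{t_1}-\lambda_{t_2}$ fails. The paper states this explicitly (Section~\ref{s:prelim}: ``the fact that $Q=\Id$ prevents $\Lambda(\Omega)$ from being non-resonant''), and the computation you are recalling from the proof of Theorem~\ref{grille-tangent} concerns the \emph{perturbed} operator $-\Delta+\rho V_0^{\Id,\chi_n}$, not $-\Delta$ itself. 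Consequently your analytic-path argument has no non-resonant anchor point in $\Sigma$, and the density of the corresponding $\mathcal{O}_n$'s is not established. The paper obtains such an anchor by a genuine shape-perturbation computation (Lemma~\ref{l:P1residual}): it deforms one side of a thin rectangle by a vector field $X$ and uses the Hadamard formula $\dot\lambda_j(0)=-\int_{\partial\hat\Omega}(\partial_\nu\phi_j)^2\,X\cdot\nu$ together with linear independence of $\sin^2(j\pi x_2/\hat L)$ to break any prescribed rational relation among the first $\ell$ eigenvalues.

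A secondary issue: your Green's-identity rewriting of $b_{ij}(Q,\chi)$ as a boundary integral against $\partial_\nu(\phi_i\overline{\phi_j})$ cannot work as stated, since $\phi_i=\phi_j=0$ on $\partial Q(\Omega)$ forces $\partial_\nu(\phi_i\phi_j)=\phi_i\partial_\nu\phi_j+\phi_j\partial_\nu\phi_i=0$ identically there. One can indeed represent $\chi\mapsto b_{ij}$ as a boundary pairing, but the density on $Q(\Gamma_D^g)$ is $\partial_\nu w$ for the auxiliary solution of $-\Delta w=\phi_i\phi_j$ with the same mixed homogeneous boundary conditions as $V_0$, and showing this does not vanish identically requires a more careful argument than ``unique continuation for the Dirichlet Laplacian.'' The paper sidesteps this entirely: it exhibits one explicit $\chi$ (namely $\chi_n$ with $n$ even, Proposition~\ref{Sinfty}) for which the connectedness chain holds on the rectangle, and then propagates by analyticity via Corollary~\ref{propagation}.
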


The proofs of Theorems \ref{grille-tangent} and \ref{grille-deformation} can be found in Sections \ref{sec:grille-tangent} and \ref{sec:grille-deformation}, respectively. 
They are based on a general sufficient condition for controllability proved in \cite{Relax} and recalled in Section~\ref{s-general} below. In a nutshell, such a condition is based, on the one hand, on a nonresonance property of the spectrum of the Schr\"odinger operator and, on the other hand, on a coupling property for the interaction term (see the notion of connectedness chain introduced in Definition~\ref{d-chain}). These properties are expressed as a countable number of open conditions. Their density is proved through a global analytic propagation argument. 

In Section \ref{s-generalizations}, we present two generalizations of these results, motivated by the applications. First, in Subsection \ref{s:partial}, we consider a situation where the gate only partially covers the upper side of the rectangle domain. Then, in Subsection \ref{s:nonlinear}, we take into account in our model the self-consistent electrostatic Poisson potential, as a perturbation of the applied potential $V$.

\section{Proof of the genericity results}

\subsection{General controllability conditions for bilinear quantum systems}\label{s-general}

We recall in this section a general approximate controllability result for bilinear quantum systems obtained in \cite{Relax}.

Let $\cal H$ be a complex Hilbert space with scalar product $\langle \cdot ,\cdot \rangle$ and  $A,B$ be two linear skew-adjoint operators on $\cal H$. Let $B$ be bounded and denote by $D(A)$ the domain of $A$. 
Consider the 
controlled equation
\begin{equation} \label{eq:main}
\frac{d\psi}{dt}(t)=(A+u(t)B) \psi(t), \quad u(t) \in [0,\delta],
\end{equation}
with $\delta>0$. We say that $A$ 
 satisfies assumption $(\mathfrak{A})$ 
if there exists an orthonormal basis 
$(\phi_k)_{k \in \mathbb{N}}$ of $\cal H$
made of eigenvectors of $A$
whose associated 
eigenvalues $(i \lb_{k})_{k \in \mathbb{N}}$ are all simple.

\begin{definition}\label{d-chain}
A subset $S$ of
$\mathbb{N}^2$ \emph{couples} two levels $j,k$ in $\mathbb{N}$
if
there exists a finite sequence $\big ((s^{1}_{1},s^{1}_{2}),\ldots,(s^{p}_{1},s^{p}_{2}) \big )$
in $S$ such that
\begin{description}
\item[$(i)$] $s^{1}_{1}=j$ and $s^{p}_{2}=k$;
\item[$(ii)$] $s^{j}_{2}=s^{j+1}_{1}$ for every $1 \leq j \leq p-1$.
\end{description}
$S$ is called a \emph{connectedness chain} if $S$  couples every pair of levels in $\mathbb{N}$.

$S$ is a \emph{non-resonant connectedness chain for $(A,B,\Phi)$} if it is a connectedness chain,  
$\langle  \phi_{j}, B \phi_{k}\rangle \neq 0$ for every $ (j,k)\in S$, and  
$\lb_{s_1}-\lb_{s_2}\neq \lb_{t_1}-\lb_{t_2}$ for every
 $(s_1,s_2)\in S$ with $s_1\ne s_2$ and every $(t_{1},t_{2})$ in
$\mathbb{N}^2\setminus\{(s_1,s_2)\}$ such that $\langle \phi_{t_{1}}, B \phi_{t_{2}}\rangle  \neq 0$.
\end{definition}

\begin{theorem}[\cite{Relax}]\label{THE_Control_collectively}\label{t-relax}
 Let
  $A$ satisfy $(\mathfrak{A})$ and let $\Phi=(\phi_k)_{k \in \mathbb{N}}$ be an orthonormal basis of eigenvectors of $A$. 
 If there exists a non-resonant connectedness chain for $(A,B,\Phi)$
then
\eqref{eq:main} is approximately controllable.
\end{theorem}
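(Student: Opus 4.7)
The plan is to reduce approximate controllability to the ability to steer a finite-dimensional Galerkin truncation along arbitrary unitary transformations, and then build such transformations from elementary two-level rotations obtained by resonant periodic forcing along the edges of the connectedness chain $S$.

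First I would fix $\psi_0,\psi_1\in\mathcal{H}$ of unit norm and $\varepsilon>0$, expand them in the basis $\Phi=(\phi_k)_{k\in\mathbb{N}}$, and choose $N$ so large that the tails have norm less than $\varepsilon/3$. Let $\pi_N$ denote the orthogonal projection onto $\mathcal{H}_N=\mathrm{span}(\phi_1,\dots,\phi_N)$. Since $B$ is bounded and $A$ generates a unitary group, the propagator $\Upsilon^u_t$ of $A+u(t)B$ depends continuously on $u\in L^\infty([0,T],[0,\delta])$ in the strong operator topology and satisfies a Galerkin approximation estimate: $\|(\mathrm{Id}-\pi_N)\Upsilon^u_t\pi_N\|\to 0$ as $N\to\infty$, uniformly on bounded time intervals and on the control set $[0,\delta]$. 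This reduces the problem to constructing, with arbitrary precision, any prescribed unitary in $U(\mathcal{H}_N)$ as a finite concatenation of propagators $\Upsilon^u_t$ restricted to $\mathcal{H}_N$.

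The core step is to activate, selectively, one pair $(j,k)\in S$ at a time. For this I would use a control of the form $u(t)=u_0+\eta v(\omega t)$, where $u_0,\eta\in(0,\delta)$ are small, $\omega=\lb_j-\lb_k$, and $v$ is a smooth $2\pi$-periodic function with a prescribed first Fourier coefficient. Passing to the interaction picture (conjugating by $e^{t(A+u_0 B)}$, whose spectrum is a perturbation of $(i\lb_k)$), Duhamel's formula and a standard averaging argument show that, over a long time $T=2\pi n/\omega$ and in the limit $\eta T\to\mathrm{const}$, the effective propagator on $\mathcal{H}_N$ is, up to errors $O(\eta)$, a rotation in the $(\phi_j,\phi_k)$-plane with angle proportional to $\langle\phi_j,B\phi_k\rangle$. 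Here the non-resonance hypothesis is crucial: every other oscillating term in the Duhamel expansion has a frequency $\lb_{t_1}-\lb_{t_2}$ distinct from $\omega$ whenever $\langle\phi_{t_1},B\phi_{t_2}\rangle\ne 0$, so its time average vanishes. Since $\langle\phi_j,B\phi_k\rangle\ne 0$, by tuning $\eta$ and $n$, one realizes on $\mathcal{H}_N$ any planar rotation between $\phi_j$ and $\phi_k$ to arbitrary precision, modulo diagonal phases which are controlled by drift intervals with $u\equiv u_0$.

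The connectedness chain property then allows iteration: because $S$ couples every pair of levels in $\{1,\dots,N\}$, the Lie algebra generated inside $\mathfrak{u}(\mathcal{H}_N)$ by the skew-Hermitian operators $i(\,|\phi_j\rangle\langle\phi_k|+|\phi_k\rangle\langle\phi_j|\,)$, $(j,k)\in S$, together with the diagonal part of $A$, contains $\mathfrak{su}(\mathcal{H}_N)$. Hence concatenating elementary rotations along a path in $S$ connecting the support of $\pi_N\psi_0$ to that of $\pi_N\psi_1$ produces a control that sends $\pi_N\psi_0$ to within $\varepsilon/3$ of $\pi_N\psi_1$ in $\mathcal{H}_N$; combining with the Galerkin estimate of the first step gives $\|\psi(T)-\psi_1\|<\varepsilon$.

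The main obstacle is the justification of the averaging step uniformly on the Galerkin subspace: one must show that the non-resonant Duhamel terms really do average to zero at the chosen time scale while the resonant term $(j,k)$ accumulates linearly, with error bounds independent of $N$ up to the tail that is already controlled by the Galerkin estimate. This is the technical heart of \cite{Relax} and is where the full strength of the non-resonance condition (ruling out all spurious coincidences $\lb_{s_1}-\lb_{s_2}=\lb_{t_1}-\lb_{t_2}$ compatible with the coupling) is used.
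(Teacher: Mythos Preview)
The paper does not prove this theorem at all: it is quoted verbatim from \cite{Relax} and used as a black box, with only the surrounding remarks alluding to ``the Lie--Galerkin approach behind Theorem~\ref{t-relax}''. Your sketch is a faithful outline of precisely that approach from \cite{Relax} --- Galerkin truncation, selective excitation of a single transition $(j,k)\in S$ by resonant periodic forcing and averaging (this is where the non-resonance condition enters), and generation of $\mathfrak{su}(\mathcal{H}_N)$ from the elementary rotations via the connectedness of $S$ --- so there is nothing to compare against in the present paper, and your proposal is consistent with the cited source.
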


\begin{remark}\label{strongly-non-resonant}
The simplicity of the spectrum 
required in Definition~\ref{d-chain} is not necessary. The construction in \cite{Relax}  is indeed  
slightly more general and we refer to that paper and \cite{metalemma} for further details. 

We also recall that a similar result based on a
 stronger requirement has been proposed in \cite{CMSB}. In that paper,  the spectrum of the operator $A$ 
was asked  to be \emph{non-resonant}, in the sense that  every nontrivial finite linear combination with rational coefficients of its eigenvalues was asked to be nonzero.
\end{remark}

\begin{remark}
The statement of Theorem~\ref{t-relax} could be strengthened, according to the results in \cite{Relax}, in two other directions:  first, the controllability could be extended beyond single wavefunctions, towards ensembles (controllability in the sense of density matrices and simultaneous controllability); second,  
unfeasible trajectories in the unit  sphere of $\cal H$ turn out to be trackable (i.e., they can be followed approximately with arbitrarily precision by admissible ones) at least when the modulus (but not the phase) of the components of the wavefunction are considered. 
Moreover, the proof of Theorem~\ref{t-relax} given in \cite{Relax} is constructive, leading to a control design algorithm based on the knowledge of the spectrum of the operator $A$ (see also \cite{chambrion} for an alternative construction).
\end{remark}

\begin{remark}\label{controllabilityH1}
Another consequence of the Lie--Galerkin approach behind Theorem~\ref{t-relax} is that the 
conclusions  of Theorems \ref{grille-tangent} and \ref{grille-deformation}   could be strengthened by stating approximate
controllability in stronger topologies.
The key point is that approximate controllability can be obtained by requiring, in addition, that the total variation and the $L^1$ norm of the control law are bounded uniformly with respect to the tolerance (see \cite{Relax,chambrion}). Proposition 3 in \cite{thomas-CPDE} then implies that, for an initial and final conditions $\psi_0,\psi_1\in H^2(\Omega)$, for every tolerance $\varepsilon>0$, there exists a control steering $\psi_0$ to an $\varepsilon$-neighbourhood of $\psi_1$ for the  $L^2$-norm, while satisfying a uniform bound (independent of $\varepsilon$) for the $H^2$-norm. An interpolation argument allows to 
conclude that, for $\xi\in (0,2)$,  $\psi_0$ can be steered $\varepsilon$-close to $\psi_1$ in the $H^{\xi}$-norm.
 \end{remark}

\subsection{Preliminary steps of the proofs}\label{s:prelim}

The proofs of Theorems~\ref{grille-tangent} and \ref{grille-deformation} are based on the idea of \emph{propagating} sufficient controllability conditions using analytic perturbations (\cite{hillairet-judge,mason-sigalotti,privat-sigalotti}). This is possible since the general controllability criterion for quantum systems seen in the previous section can be seen as a countable set of nonvanishing scalar conditions.

More precisely, let us denote by
$\Lambda(Q(\Omega))$ the spectrum of the Laplace--Dirichlet operator on $Q(\Omega)$ and, for every
$(Q(\Omega),Q(\Gamma_D^g),\chi)\in \mathcal{P}$ such that $\Lambda(Q(\Omega))$ is simple (i.e., each eigenvalue is simple), 
define
\begin{equation}\label{eq:S}
S(Q(\Omega),Q(\Gamma_D^g),\chi)=\left\{(k,j)\in \mathbb{N}^2\mid \int_{Q(\Omega)} V^{Q,\chi}_0(x)\phi_k(x)\phi_{j}(x)dx \neq 0\right\},
\end{equation}
where $\{\phi_j\}_{j\in\mathbb{N}}$ is a Hilbert basis of eigenfunctions of the Laplace--Dirichlet operator on $Q(\Omega)$, ordered following the 
growth of the corresponding eigenvalues. 

Theorem~\ref{grille-deformation} is proved by 
applying Theorem~\ref{t-relax} with $A$ the Laplace--Dirichlet operator on $Q(\Omega)$ multiplied by $i$ and $B$ the multiplicative operator defined by $B\psi=-i V^{Q,\chi}_0\psi$.  
We then show that both sets
$$
\mathcal{P}_1=\{(Q(\Omega),Q(\Gamma_D^g),\chi)\in \mathcal{P}\mid \Lambda(Q(\Omega)) \mbox{ non-resonant}\},$$
where 
the notion of non-resonant spectrum is the one introduced in Remark~\ref{strongly-non-resonant}, and 
$$\mathcal{P}_2=\{(Q(\Omega),Q(\Gamma_D^g),\chi)\in \mathcal{P}\mid \Lambda(Q(\Omega)) \mbox{ simple,  $S(Q(\Omega),Q(\Gamma_D^g),\chi)$ connectedness chain}\}
$$
are residual in $\mathcal{P}$. Their intersection is therefore residual as well (it is itself the intersection of countably many open dense sets). The following result resumes these considerations. 

\begin{proposition}\label{p:P1P2}
If $\mathcal{P}_1$ and $\mathcal{P}_2$ are residual then
 the control problem \eqref{Qschro1} is approximately controllable for a generic element of $\mathcal{P}$.
\end{proposition}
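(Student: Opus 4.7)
The plan is to combine the Baire property of $\mathcal{P}$ with Theorem~\ref{t-relax} applied to the pair $(A,B)$ suggested after \eqref{eq:S}. Since $\mathcal{P}$ is a Baire space and both $\mathcal{P}_1$ and $\mathcal{P}_2$ are assumed residual, the intersection $\mathcal{P}_1\cap\mathcal{P}_2$ is still residual and, in particular, dense. It therefore suffices to prove that, for every triple $(Q(\Omega),Q(\Gamma_D^g),\chi)\in\mathcal{P}_1\cap\mathcal{P}_2$, the associated system~\eqref{Qschro1} is approximately controllable.

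Fix such a triple and set $\mathcal{H}=L^2(Q(\Omega),\C)$, $A=i\Delta$ with domain $H^2\cap H^1_0$ on $Q(\Omega)$, and $B\psi=-iV_0^{Q,\chi}\psi$. Since $V_0^{Q,\chi}\in L^\infty(Q(\Omega))$, the operator $B$ is bounded and skew-adjoint, while $A$ is skew-adjoint with discrete spectrum $\{i\lambda_k\}_{k\in\mathbb{N}}$, where $-\lambda_k$ are the Dirichlet eigenvalues. Membership of the triple in $\mathcal{P}_2$ guarantees that $\Lambda(Q(\Omega))$ is simple, so $A$ satisfies assumption~$(\mathfrak{A})$ with the Hilbert basis $\Phi=(\phi_k)_{k\in\mathbb{N}}$ of eigenfunctions used in the definition of $S(Q(\Omega),Q(\Gamma_D^g),\chi)$. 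By construction of $B$, one has $\langle\phi_j,B\phi_k\rangle\neq 0$ if and only if $(j,k)\in S(Q(\Omega),Q(\Gamma_D^g),\chi)$, and membership in $\mathcal{P}_2$ ensures that this $S$ is a connectedness chain in the sense of Definition~\ref{d-chain}.

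It remains to establish the non-resonance part of Definition~\ref{d-chain}, namely that for every $(s_1,s_2)\in S$ with $s_1\neq s_2$ and every $(t_1,t_2)\in\mathbb{N}^2\setminus\{(s_1,s_2)\}$ with $\langle\phi_{t_1},B\phi_{t_2}\rangle\neq 0$, one has $\lambda_{s_1}-\lambda_{s_2}\neq\lambda_{t_1}-\lambda_{t_2}$. Triples in $\mathcal{P}_1$ satisfy the strong non-resonance condition of Remark~\ref{strongly-non-resonant}: every nontrivial finite rational combination of eigenvalues is nonzero. If we had $\lambda_{s_1}-\lambda_{s_2}-\lambda_{t_1}+\lambda_{t_2}=0$, the only way for this $\mathbb{Q}$-linear combination to be trivial would be that the coefficients cancel after identifying equal indices; the simplicity of $\Lambda(Q(\Omega))$, together with $s_1\neq s_2$ and $(t_1,t_2)\neq(s_1,s_2)$, rules out each of the possible identifications (e.g.\ $\{s_1,t_2\}=\{s_2,t_1\}$ would force $\lambda_{s_1}=\lambda_{s_2}$, hence $s_1=s_2$). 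Hence the combination is nontrivial, contradicting the non-resonance assumption.

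All hypotheses of Theorem~\ref{t-relax} being satisfied, the system~\eqref{Qschro1} associated with the triple is approximately controllable, which concludes the proof. The argument is essentially a bookkeeping step, and the only mildly delicate point is the combinatorial case analysis in the last paragraph, which is nonetheless standard once simplicity and strong non-resonance of the spectrum are in hand; the substantive work is deferred to showing that $\mathcal{P}_1$ and $\mathcal{P}_2$ are indeed residual, which is the content of the subsequent sections.
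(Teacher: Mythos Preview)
Your proof is correct and follows exactly the approach the paper outlines in the discussion preceding the proposition (which the paper leaves as a summary without a formal proof): intersect the two residual sets and verify, via the strong non-resonance of $\Lambda(Q(\Omega))$ and the connectedness of $S$, that Theorem~\ref{t-relax} applies to every triple in $\mathcal{P}_1\cap\mathcal{P}_2$. One small imprecision: in your combinatorial check, the multiset identity $\{s_1,t_2\}=\{s_2,t_1\}$ does not itself force $\lambda_{s_1}=\lambda_{s_2}$; rather, it forces either ($s_1=s_2$ and $t_1=t_2$) or ($s_1=t_1$ and $s_2=t_2$), both of which are directly excluded by the hypotheses $s_1\neq s_2$ and $(t_1,t_2)\neq(s_1,s_2)$ --- but your conclusion stands.
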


The situation is slightly different for the proof of Theorem~\ref{grille-tangent}, since the fact that $Q=\Id$ prevents 
$ \Lambda(Q(\Omega))=\Lambda(\Omega)$ from being non-resonant. Recall that $(0,\delta)$ is the interval of admissible control values (see Section \ref{sec:quantumTransport}).
We are then led to rewrite, for every $\rho\in[0,\delta)$, equation \eqref{Qschro1}  
in the case $Q=\Id$ as 
\begin{equation}\label{Qschro1-eps}
\left\{\begin{aligned}
i\partial_t\psi(t,x) &= (-\Delta+\rho V_0^{\Id,\chi}(x)) \psi(t,x) +(V_g(t)-\rho) V_0^{\Id,\chi}(x)\psi(t,x), && (t,x)\in  \mathbb{R}_+\times \Omega, \\
\psi (t,x)&=0, && (t,x)\in \mathbb{R}_+\times \partial \Omega.
\end{aligned}\right.
\end{equation}

We apply  Theorem~\ref{t-relax} to \eqref{Qschro1-eps} with $A= -i(-\Delta+\rho V_0^{\Id,\chi}\Id)$ on $\Omega$ (with Dirichlet boundary conditions) and $B=-iV^{\Id,\chi}_0\Id$.
In analogy to the notation  introduced above, let
$$\mathcal{P}_{1,\mathrm{BC}}^\rho =\{\chi\in \mathcal{C}^1_0(\Gamma_D^g)\mid 
\mbox{the spectrum of $-\Delta+\rho V_0^{\Id,\chi}\Id$ is weakly non-resonant}\},$$
where a sequence $(\lambda_n)_{n\in \mathbb{N}}$ is said to be \emph{weakly non-resonant} if
$\lambda_{s_1}-\lambda_{s_2}\ne \lambda_{t_1}-\lambda_{t_2}$ for every $(s_1,s_2),(t_1,t_2)\in \mathbb{N}^2$ with 
$s_1\ne s_2$ and $(s_1,s_2)\ne (t_1,t_2)$.

Moreover, let
\begin{align*}
\mathcal{P}_{2,\mathrm{BC}}^\rho =&\{\chi\in \mathcal{C}_0^1(\Gamma_D^g)\mid 
\mbox{the spectrum of $-\Delta+\rho V_0^{\Id,\chi}\Id $ is simple}\\
 &\mbox{ and $S_\rho(\chi)$ is a connectedness chain}\}
 \end{align*}
where
\begin{equation}\label{eq:Srho}
S_\rho(\chi)=\left\{(k,j)\in \mathbb{N}^2\mid \int_{\Omega} V^{\Id,\chi}_0(x)\phi_{k,\rho}(x)\phi_{j,\rho}(x)dx \neq 0\right\}
\end{equation}
and $\{\phi_{j,\rho}\}_{j\in\mathbb{N}}$ is a Hilbert basis of eigenfunctions of $-\Delta+\rho V_0^{\Id,\chi}\Id$, ordered following the 
growth of the 
corresponding eigenvalues.

System~\eqref{Qschro1-eps} is approximately controllable if $\chi\in \mathcal{P}_{1,\mathrm{BC}}^\rho\cap \mathcal{P}_{2,\mathrm{BC}}^\rho$ for some $\rho\in (0,\delta)$. 

Theorem~\ref{grille-tangent} is then proved 
through the following proposition, playing the role of Proposition~\ref{p:P1P2} in the case $Q=\Id$. 

\begin{proposition}\label{p:P1BCP2BC}
Let $L^2\not\in \pi^2\mathbb{Q}$ and $Q=\Id$.
If there exists $\rho\in (0,\delta)$ such that $\mathcal{P}_{1,\mathrm{BC}}^\rho$ and $\mathcal{P}_{2,\mathrm{BC}}^\rho$ are residual then  the control problem \eqref{Qschro1} is approximately controllable for a generic $\chi$ in  $\mathcal{C}_0^1(\Gamma^g_D)$.
\end{proposition}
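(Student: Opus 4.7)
My plan is to view this proposition as an organizational bridge between the abstract criterion of Theorem~\ref{t-relax} and the perturbation analysis (done elsewhere) that yields residuality of $\mathcal{P}_{1,\mathrm{BC}}^\rho$ and $\mathcal{P}_{2,\mathrm{BC}}^\rho$. First I would fix $\rho\in(0,\delta)$ as provided by the hypothesis and note that, since $\mathcal{C}_0^1(\Gamma_D^g)$ endowed with its $\mathcal{C}^1$ metric is a Baire space, the intersection $\mathcal{R}:=\mathcal{P}_{1,\mathrm{BC}}^\rho\cap\mathcal{P}_{2,\mathrm{BC}}^\rho$ is itself residual, being the intersection of two residual sets. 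It then suffices to show that \eqref{Qschro1} with $Q=\Id$ is approximately controllable for every $\chi\in\mathcal{R}$.

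For such a $\chi$ I would work with the equivalent reformulation \eqref{Qschro1-eps}, in which the piece $\rho V_0^{\Id,\chi}$ of the potential is absorbed into the drift, and apply Theorem~\ref{t-relax} with $A=-i(-\Delta+\rho V_0^{\Id,\chi}\Id)$, $B=-iV_0^{\Id,\chi}\Id$, and the eigenbasis $\Phi=\{\phi_{j,\rho}\}_{j\in\mathbb{N}}$ of $A$. The three conditions to check are: (i) assumption $(\mathfrak{A})$, i.e.\ simplicity of the spectrum of $A$, which follows from $\chi\in\mathcal{P}_{2,\mathrm{BC}}^\rho$; (ii) the set of pairs $(j,k)$ with $\langle\phi_{j,\rho},B\phi_{k,\rho}\rangle\neq 0$, which coincides up to the factor $-i$ with the set $S_\rho(\chi)$ from \eqref{eq:Srho}, is a connectedness chain, again by $\mathcal{P}_{2,\mathrm{BC}}^\rho$; (iii) the non-resonance condition of Definition~\ref{d-chain}. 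For (iii) I would simply observe that the weak non-resonance defining $\mathcal{P}_{1,\mathrm{BC}}^\rho$ — that all differences $\lambda_{s_1}-\lambda_{s_2}$ with $s_1\neq s_2$ be pairwise distinct — is strictly stronger than what Definition~\ref{d-chain} asks, which only compares such a difference with the couplings singled out by $B$.

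A small technical point is that in \eqref{Qschro1-eps} the control $V_g(t)-\rho$ ranges over $[-\rho,\delta-\rho]$ rather than $[0,\delta]$ as in the abstract setup of Section~\ref{s-general}. I would dispose of this by the standard affine reparametrisation of a bilinear control: writing $V_g(t)-\rho=-\rho+\delta u(t)$ with $u\in[0,1]$ recasts \eqref{Qschro1-eps} in the canonical form of Theorem~\ref{t-relax} after a further harmless rescaling of $B$ and a bounded modification of $A$; since $\rho\in(0,\delta)$ the resulting interval is non-degenerate, so nothing is lost. Approximate controllability of this rewritten system is, by construction, the approximate controllability of \eqref{Qschro1} with $Q=\Id$ for that choice of $\chi$, which completes the argument.

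The main difficulty in the whole strategy is not located in this proposition, which is essentially bookkeeping on top of Theorem~\ref{t-relax}: the substantive work lies in the separate verification that $\mathcal{P}_{1,\mathrm{BC}}^\rho$ and $\mathcal{P}_{2,\mathrm{BC}}^\rho$ are residual for some $\rho\in(0,\delta)$, an analytic perturbation problem in which the arithmetic hypothesis $L^2\notin\pi^2\mathbb{Q}$ is used to rule out accidental degeneracies of the unperturbed Dirichlet spectrum on the rectangle and to launch the propagation argument outlined in Section~\ref{s:prelim}.
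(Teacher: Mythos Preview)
Your overall plan is exactly the (implicit) argument in the paper: intersect the two residual sets in the Baire space $\mathcal{C}_0^1(\Gamma_D^g)$, and for each $\chi$ in the intersection apply Theorem~\ref{t-relax} to the shifted system \eqref{Qschro1-eps} with $A=-i(-\Delta+\rho V_0^{\Id,\chi})$ and $B=-iV_0^{\Id,\chi}$. Your verification of $(\mathfrak{A})$, of the connectedness chain, and of the non-resonance condition is correct.

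There is, however, a genuine slip in your ``small technical point'' about the control range. The affine reparametrisation you propose, $V_g-\rho=-\rho+\delta u$ with $u\in[0,1]$, replaces $A$ by $A-\rho B$ and $B$ by $\delta B$. But
\[
A-\rho B=-i(-\Delta+\rho V_0^{\Id,\chi})-\rho(-iV_0^{\Id,\chi})=i\Delta,
\]
which is precisely the \emph{unperturbed} Dirichlet Laplacian. The whole purpose of the shift by $\rho$ was to endow $A$ with a weakly non-resonant spectrum; your ``bounded modification of $A$'' undoes this and brings you back to the resonant rectangle spectrum, so the hypothesis $\chi\in\mathcal{P}_{1,\mathrm{BC}}^\rho$ no longer gives you anything. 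Thus the modification is not harmless.

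The fix is immediate and requires no reparametrisation at all: since $\rho\in(0,\delta)$, the interval $[-\rho,\delta-\rho]$ contains $[0,\delta-\rho]$ with $\delta-\rho>0$. Apply Theorem~\ref{t-relax} directly to \eqref{Qschro1-eps} with the same $A$ and $B$ and with control values restricted to $[0,\delta-\rho]$; this corresponds to taking $V_g(t)\in[\rho,\delta]\subset[0,\delta]$, which is admissible. Approximate controllability with this smaller class of controls a fortiori gives approximate controllability with the full range $V_g\in[0,\delta]$.
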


A crucial tool for proving that the sets introduced above are residual is the following proposition, stating that  $V_0^{Q,\chi}$ is  analytic with respect to $Q$ and $\chi$.

\begin{proposition}\label{p:analytic}
Let $I$ be an open interval and $I\ni t\mapsto (Q_t,\varphi_t)$ be an analytic curve in the product of $\mathrm{Diff}^1_0$ with the space $\mathcal{C}_0^1(\Gamma_D^g)$ defined in \eqref{C20}.
Denote by 
$\chi_t$ the composition $\varphi_t\circ Q_t^{-1}$ and by 
$V_{0,t}$ the function $V_0^{Q_t,\chi_t}$ defined as in \eqref{V0t}.  
Then $t\mapsto V_{0,t}\circ Q_t$ is an analytic curve in $H^1(\Omega)$.
\end{proposition}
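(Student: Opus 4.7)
The plan is to pull back the problem to the fixed reference domain $\Omega$ via the diffeomorphism $Q_t$, and then apply the analytic implicit function theorem (or, equivalently, use the analyticity of inversion in the Banach algebra of bounded operators) to the resulting elliptic boundary value problem with analytically varying coefficients and data.

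\textbf{Step 1 (Pullback).} Set $u_t=V_{0,t}\circ Q_t$. A standard change-of-variables computation turns \eqref{V0t} into the following problem on the fixed domain $\Omega$:
\begin{equation*}
-\mathrm{div}(A_t\nabla u_t)=0\text{ in }\Omega,\quad u_t=\varphi_t\text{ on }\Gamma_D^g,\quad u_t=0\text{ on }\Gamma_D^s\cup\Gamma_D^d,\quad A_t\nabla u_t\cdot\nu=0\text{ on }\Gamma_N,
\end{equation*}
with $A_t=|\det DQ_t|\,(DQ_t)^{-1}(DQ_t)^{-\top}$. Because $Q_t$ is an orientation-preserving $\mathcal{C}^1$-diffeomorphism, $A_t$ is symmetric and uniformly elliptic on $\overline{\Omega}$.

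\textbf{Step 2 (Analyticity of coefficients and data).} Since $t\mapsto Q_t$ is analytic into $\mathrm{Diff}^1_0$, the map $t\mapsto DQ_t$ is analytic into $\mathcal{C}^0(\overline{\Omega};\R^{2\times 2})$. Analyticity of matrix inversion and of the determinant on the open set of invertible matrices then yields that $t\mapsto A_t$ is analytic into $\mathcal{C}^0(\overline{\Omega};\R^{2\times 2})$. In parallel, $t\mapsto\varphi_t$ is analytic in $\mathcal{C}_0^1(\Gamma_D^g)$. Using a continuous linear right inverse of the trace operator $H^1(\Omega)\to H^{1/2}(\partial\Omega)$, I lift $\varphi_t$ (extended by $0$ on $\Gamma_D^s\cup \Gamma_D^d$, the compatibility at corners being guaranteed by the vanishing condition defining $\mathcal{C}_0^1$) to an analytic curve $t\mapsto\tilde\varphi_t\in H^1(\Omega)$.

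\textbf{Step 3 (Reduction to a homogeneous problem).} Let $\mathcal{V}=\{v\in H^1(\Omega)\,:\, v=0\text{ on }\Gamma_D\}$ and $a_t(u,v)=\int_\Omega A_t\nabla u\cdot\nabla v\,dx$. Writing $u_t=\tilde\varphi_t+w_t$ with $w_t\in\mathcal{V}$, we recast the problem as $a_t(w_t,v)=-a_t(\tilde\varphi_t,v)$ for all $v\in\mathcal{V}$. The associated operator $L_t:\mathcal{V}\to\mathcal{V}^*$ is an analytic family of isomorphisms: continuity and coercivity follow from the bounds on $A_t$, uniform on compact subintervals of $I$. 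Hence $t\mapsto L_t^{-1}$ is analytic into $\mathcal{L}(\mathcal{V}^*,\mathcal{V})$ (the inverse being obtained locally by a Neumann series that converges analytically). Since $-a_t(\tilde\varphi_t,\cdot)$ is an analytic curve in $\mathcal{V}^*$, the solution $w_t=L_t^{-1}(-a_t(\tilde\varphi_t,\cdot))$ is analytic in $\mathcal{V}$, and therefore so is $u_t=\tilde\varphi_t+w_t$ in $H^1(\Omega)$.

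\textbf{Main difficulty.} The delicate point is the verification of analyticity of the pulled-back coefficients $A_t$: only $\mathcal{C}^0$ control is available on $DQ_t$, so the construction must be performed in the Banach algebra $\mathcal{C}^0(\overline{\Omega};\R^{2\times 2})$, in which matrix inversion and determinant are indeed analytic on the open cone of invertible matrices. A second subtlety is the analytic lifting of boundary data, which requires compatibility at the two corners $Q_t(0,L)$ and $Q_t(\pi,L)$; this is provided precisely by the vanishing conditions in the definition \eqref{C20} of $\mathcal{C}_0^1$.
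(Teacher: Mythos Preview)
Your proof is correct and follows essentially the same strategy as the paper: pull back to a fixed reference domain, lift the boundary datum to reduce to a homogeneous Dirichlet problem, and conclude by analytic dependence of the solution operator on the coefficients. The only cosmetic differences are that the paper pulls back to $Q_{t_0}(\Omega)$ rather than $\Omega$, lifts $\varphi_t$ by the explicit ``constant on vertical segments'' extension instead of an abstract trace right inverse, and phrases the final step via the analytic implicit function theorem rather than Neumann-series analyticity of $L_t^{-1}$.
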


The proof of the proposition is given in next section. One important consequence for our argument is the following corollary. 

\begin{corollary}\label{propagation}
Let $\hat{\mathcal{P}}$ be one of the sets $ \mathcal{P}_1$, $ \mathcal{P}_2$,
$ \mathcal{P}^\rho_{1,\mathrm{BC}}$, $ \mathcal{P}^\rho_{2,\mathrm{BC}}$.
If $\hat{\mathcal{P}}$ is nonempty, then 
$\hat{\mathcal{P}}$ is residual.
Moreover, if $ \mathcal{P}^\rho_{j,\mathrm{BC}}$ is nonempty for $j\in\{1,2\}$ and $\rho\in[0,\delta)$, then 
$ \mathcal{P}^{\rho'}_{j,\mathrm{BC}}$ is nonempty (and hence dense) for almost all $\rho'\in [0,\delta)$. 
\end{corollary}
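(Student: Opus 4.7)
I will use the classical analytic propagation principle. The first step is to write each of the four sets $\hat{\mathcal{P}}$ as a countable intersection $\bigcap_{n\in\mathbb{N}} U_n$, where each $U_n$ is the non-vanishing locus of a scalar quantity built from eigenvalues $\lambda_j$ and matrix elements $\langle\phi_k,V_0^{Q,\chi}\phi_j\rangle$. Specifically, simplicity and weak non-resonance become conditions of the form $\lambda_j\ne\lambda_k$ or $\lambda_{s_1}-\lambda_{s_2}\ne\lambda_{t_1}-\lambda_{t_2}$; strong non-resonance in $\mathcal{P}_1$ translates into non-vanishing of all rational linear combinations (countably many conditions); the connectedness-chain property decomposes, for each pair $(j,k)\in\mathbb{N}^2$, as a countable union over finite paths $\pi$ of the condition that every matrix element along $\pi$ be nonzero. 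In each case I obtain countably many scalar open conditions, and the task reduces to proving that each is dense.

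For openness, I will combine Proposition~\ref{p:analytic} with Kato--Rellich analytic perturbation theory applied on a fixed Hilbert space, obtained by pulling back the Schr\"odinger operator from $Q(\Omega)$ to $\Omega$. This yields continuity, and in fact real-analyticity along analytic curves, of eigenvalues and of a suitably normalized eigenbasis with respect to the parameters $(Q,\chi)$ or $\chi$. In particular, for the connectedness-chain conditions, a connecting path $\pi$ realized at a parameter $p_0$ persists in a neighborhood of $p_0$ by continuity of the matrix elements, so the property is open.

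For density I will fix $p_0\in\hat{\mathcal{P}}\subseteq U_n$ and any $q$ in the ambient Baire space, and build an analytic curve $c:[0,1]\to\mathcal{P}$ with $c(0)=q$, $c(1)=p_0$. Linear interpolation handles the $\mathcal{C}^1_0$ factor directly; on the diffeomorphism factor I use convex combinations, which remain in $\mathrm{Diff}^1_0$ on sufficiently short segments, concatenating finitely many such local analytic pieces. The scalar function $t\mapsto f_n(c(t))$ is then real-analytic, nonzero at $t=1$, and hence has only isolated zeros. Choosing $t$ near $0$ outside this discrete set produces points of $U_n$ arbitrarily close to $q$; Baire category then gives $\hat{\mathcal{P}} = \bigcap_n U_n$ residual. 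For the moreover statement, given $\chi_0\in\mathcal{P}^\rho_{j,\mathrm{BC}}$ I consider the one-parameter analytic family $\rho'\mapsto -\Delta+\rho'V_0^{\Id,\chi_0}\Id$. Kato--Rellich yields analytic branches of eigenvalues and eigenvectors in $\rho'\in[0,\delta)$; each scalar condition defining $\mathcal{P}^{\rho'}_{j,\mathrm{BC}}$ is then a real-analytic function of $\rho'$ which is nonzero at $\rho'=\rho$, hence vanishes only on a discrete set. Their countable union is Lebesgue-null, so $\chi_0\in\mathcal{P}^{\rho'}_{j,\mathrm{BC}}$ for almost every $\rho'$, and density follows from the first part.

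\textbf{Main obstacle.} The key technical input is the analytic dependence of the eigenelements on the parameters in a form compatible with the zero-set argument, which requires carefully combining Proposition~\ref{p:analytic} with Kato--Rellich perturbation theory after reduction to the fixed domain $\Omega$. A secondary subtlety is that $\mathrm{Diff}^1_0$ is not a linear space, so linear interpolation of diffeomorphisms is only guaranteed locally; I handle this by finite concatenation of short analytic segments, which is enough because the density claim is itself local.
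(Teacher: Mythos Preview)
Your approach is the paper's: decompose $\hat{\mathcal{P}}$ as a countable intersection of open sets defined by non-vanishing of scalar quantities built from eigenvalues and matrix elements, then use analyticity along curves (Proposition~\ref{p:analytic} plus Kato--Rellich) to propagate non-vanishing from a single good parameter. The paper organizes the decomposition of $\mathcal{P}_2$ slightly differently---via sets $\mathcal{A}_n$ requiring that the first $n$ levels be simple and mutually connected through finitely many auxiliary simple levels---but the content is the same. Two points in your write-up need tightening.

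First, your path construction does not deliver what you claim. A concatenation of short analytic segments is only \emph{piecewise} analytic, and a piecewise analytic scalar function can vanish identically on the segment through $q$ while being nonzero on the segment through $p_0$; the ``isolated zeros'' conclusion then fails to reach a neighborhood of $t=0$. What is needed is a single globally analytic path from $q$ to $p_0$. This can be obtained because $\mathrm{Diff}^1_0$ is an open subset of an affine Banach space and $\mathcal{C}^1_0(\Gamma^g_D)$ is linear: any continuous path in $\mathcal{P}$ can be uniformly approximated, with fixed endpoints, by a Banach-space-valued polynomial path, which stays in the open set if the approximation is close enough. The paper simply asserts that any two elements can be connected by an analytic path; you should make the same claim rather than concatenate. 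Second, along the curve Rellich provides analytic eigenbranches $(\lambda_j(t),\phi_j(t))$, but the sets $S$ and $S_\rho$ in the definitions of $\mathcal{P}_2$ and $\mathcal{P}^\rho_{2,\mathrm{BC}}$ use the eigenvalue-\emph{ordered} labeling, which can differ from the Rellich labeling at crossings. The paper handles this explicitly by introducing, at each $t$ with simple spectrum, the bijection $\xi_t$ between the two labelings and observing that non-resonance and the connectedness-chain property are invariant under permutations of the index set; you should include this step, since without it the analytic function you track corresponds to the Rellich-indexed matrix elements, not the ordered ones that actually define $U_n$.
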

\begin{proof}
In order to avoid redundancies, we prove the corollary only in the case $\hat{\mathcal{P}}=\mathcal{P}_2$. The proof can be easily adapted to the other cases. 

Let us first prove that $ \mathcal{P}_2$ is the intersection of countably many open sets. 
We claim that $ \mathcal{P}_2=\cap_{n\in \mathbb{N}}\mathcal{A}_n$, where
$\mathcal{A}_n$ is the set of triples $(Q(\Omega),Q(\Gamma_D^g),\chi)\in\mathcal{P}$ such that 
 the first $n$ eigenvalues of the Laplace--Dirichlet operator on $Q(\Om)$ are simple 
  and there exist 
 $r\in \mathbb{N}$ and $r$ other simple eigenvalues of $\lambda_{k_1}, \dots,\lambda_{k_r}$ such that the matrix
\begin{equation}\label{submat}
\left(\int_{Q(\Om)}\phi_j\phi_l V_0^{Q,\chi}\right)_{j,l\in\{1,\dots,n\}\cup\{k_1,\dots,k_r\} }
\end{equation}
 is connected\footnote{We recall 
 that a $m\times m$ matrix $C =(c_{jl})_{j,l=1}^m$  is said to be \emph{connected} if for every pair of indices 
 $j,l=1,\dots,m$ there 
there exists a finite sequence 
$j_1,\dots,j_w\in\{1,\dots,m\}$ such that $c_{jj_1}c_{j_1j_2}\cdots c_{j_{w-1}j_{w}}c_{j_w l}\ne 0$.
The set $\{(j,l)\mid c_{jl}\ne 0\}$ is said to be a \emph{connectedness chain} for $C$.}, where each $\phi_j$ is an eigenfunction corresponding to $\lambda_j$.
 It is clear that an element of $\cap_{n\in \mathbb{N}}\mathcal{A}_n$ is in $\mathcal{P}_2$, since its corresponding spectrum is simple and a connectedness chain is given by the union of all the connectedness chains for the matrices of the type \eqref{submat}.
 Conversely, if $(Q(\Omega),Q(\Gamma_D^g),\chi)\in\mathcal{P}_2$, then there exists a bijection 
 $\xi:\mathbb{N}\to \mathbb{N}$ such that each matrix
$$ \left(\int_{Q(\Om)}\phi_{\xi(j)}\phi_{\xi(l)} V_0^{Q,\chi}\right)_{j,l\in\{1,\dots,n\} }$$
is connected (see \cite[Remark 4.2]{mason-sigalotti}). 
Given $n\in\mathbb{N}$, let $N$ be such that $\xi(\{1,\dots,N\})\supset \{1,\dots,n\}$. Then, taking $r=N-n$ and $\{\lambda_{k_1},\dots,\lambda_{k_r}\}=\{\lambda_{\xi(1)},\dots,\lambda_{\xi(N)}\}\setminus \{\lambda_1,\dots,\lambda_n\} $,
we have that $(Q(\Omega),Q(\Gamma_D^g),\chi)\in\mathcal{A}_n$. 

Since each $\mathcal{A}_n$ is open (by continuity of the eigenpairs corresponding to simple eigenvalues),  we have proved that $ \mathcal{P}_2$ is the intersection of countably many open sets.

Let us now show that $\mathcal{P}_2$ is dense if it is nonempty.  
Fix $(Q(\Om),Q(\Gamma^g_D),\chi)\in \mathcal{P}_2$ and let
$$\bar S=S(Q(\Om),Q(\Gamma^g_D),\chi).$$
Let 
$I\ni t\mapsto (Q_t,\varphi_t)$ be an analytic curve in the product $\mathrm{Diff}^1_0\times \mathcal{C}_0^1(\Gamma_D^g)$ and assume that 
there exists $t_0\in I$ such that $Q_{t_0}=Q$ and $\varphi_{t_0}\circ Q=\chi$.
According to Rellich's theorem (see \cite{kato,rellich}), there exists $
I\ni t\mapsto (\lambda_j(t),\phi_{j}(t))_{j\in \mathbb{N}}$ such that 
$(\lambda_j(t),\phi_{j}(t))_{j\in \mathbb{N}}$ is a complete family of eigenpairs of the Laplace--Dirichlet operator on $Q_t(\Om)$ for every $t\in I$, with 
$I\ni t\mapsto \lambda_j(t)$ and $I\ni t\mapsto \phi_j(t)\circ Q_t$ analytic  in $\R$ and 
in $L^2(\Om,\R)$, respectively, for every $j\in\mathbb{N}$. 

Proposition~\ref{p:analytic} implies that for every $j,k\in\mathbb{N}$, the function 
$$t\mapsto \int_{Q_t(\Om)}\phi_j(t)\phi_k(t) V_0^{Q_t,\varphi_t\circ Q_t^{-1}}$$
is analytic on $I$. 
Moreover, the spectrum $\Lambda(Q_t(\Om))$ is simple for almost every $t\in I$. 

We can assume that  the sequence $(\lb_j(t_0))_{j\in\mathbb{N}}$ is (strictly) increasing.  
For every $t\in I$ such that  $\Lambda(Q_t(\Om))$ is simple, there exists $\xi_t:\mathbb{N}\to\mathbb{N}$ bijective such that $(\lb_{\xi_t(j)}(t))_{j\in\mathbb{N}}$ is  increasing. By analyticity of 
$t\mapsto \int_{Q_t(\Om)}\phi_j(t)\phi_k(t) V_0^{Q_t,\varphi_t\circ Q_t^{-1}}$ for each $(j,k)\in \bar  S$, we have that
$$\{(\xi^{-1}_t(j),\xi^{-1}_t(k))\mid (j,k)\in \bar S\}\subset S(Q_t(\Om),Q_t(\Gamma_D^g),\varphi_t\circ Q_t^{-1})$$
 for almost every $t\in I$. Since for every bijection $\hat \xi:\mathbb{N}\to\mathbb{N}$ the set 
$\{(\hat\xi(j),\hat \xi(k))\mid (j,k)\in \bar S\}$ is a connectedness chain, we conclude that for almost every $t\in I$, 
$S(Q_t(\Om),Q_t(\Gamma_D^g),\varphi_t\circ Q_t^{-1})$ is a connectedness chain. 
Hence, for almost every $t\in I$, $(Q_t(\Om),Q_t(\Gamma_D^g),\varphi_t\circ Q_t^{-1})\in \mathcal{P}_2$.

We conclude on the density of $\mathcal{P}_2$ by considering all analytic curves $t\mapsto (Q_t,\varphi_t)$ passing through $(Q,\chi)$, since each pair of elements of $\mathcal{P}_2$ can be connected by an analytic path. The second part of the statement is proved by analogous analyticity considerations with respect to the parameter $\rho$.   
\end{proof}

\subsection{Proof of Proposition \ref{p:analytic}}

Denote by $\hat \varphi_t$ the extension of $\varphi_t$ on $\overline \Omega$ which is constant on every vertical segment.
Then  $t\mapsto \hat \varphi_t$ is an analytic curve in $\mathcal{C}^1(\overline \Omega)$ with
$\hat \varphi_t\equiv 0$ on $\Gamma_D^s\cup \Gamma_D^d$.

Define $\hat \chi_t=\hat \varphi_t \circ Q_t^{-1}$ and let $\hat V_{0,t}=V_{0,t}-\hat \chi_t$. 
Notice that $\hat V_{0,t}$ is a solution to the problem 
\begin{equation}\label{hatV0t}
\left\{\begin{aligned}
-\Delta \hat V_{0,t}(x) &=\Delta \hat \chi_t\,,\quad  && x\in Q_t(\Omega),\\
\hat V_{0,t}(x)&=0, && x\in Q_t(\Gamma_D),\\
\frac{\partial \hat V_{0,t}}{\partial \nu }(x)&=0, && x\in Q_t(\Gamma_N).
\end{aligned}
\right.
\end{equation}
Equivalently,
$$\int_{Q_t(\Omega)}\nabla \hat V_{0,t}(x)\cdot \nabla \phi(x)\,dx=\int _{Q_t(\Omega)}\Delta \hat \chi_{t}(x) \phi(x)\,dx,$$ 
for every $\phi\in H^1_{0,Q_t(\Gamma_D)}(Q_t(\Omega))$, where 
$$H^1_{0,Q_t(\Gamma_D)}(Q_t(\Omega))=\{\phi\in H^1(Q_t(\Omega))\mid \phi=0\mbox{ on }Q_t(\Gamma_D)\}.$$

Fix $t_0\in I$ and notice that, for every $t\in I$, 
$$H^1_{0,Q_t(\Gamma_D)}(Q_t(\Omega))=\{\phi\circ Q_{t_0}\circ Q_t^{-1}\mid \phi\in H^1_{0,Q_{t_0}(\Gamma_D)}(Q_{t_0}(\Omega))\}.$$ 
Set $R_t=Q_{t}\circ Q_{t_0}^{-1}$. By the standard change of coordinates 
formula, 
$$ \int_{Q_{t_0}(\Omega)} ((D R_t^T)^{-1}\nabla  W_{t})\cdot ((D R_t^T)^{-1} \nabla \phi)J_t=\int_{Q_{t_0}(\Omega)} (\Delta \hat \chi_t\circ R_t)\phi \,J_t
$$
for every $\phi\in H^1_{0,Q_{t_0}(\Gamma_D)}(Q_{t_0}(\Omega))$,
where 
$D R_t$ and $D R_t^T$ are,  respectively, the Jacobian matrix of $R_t$ and its transpose, while $W_{t}=\hat V_{0,t}\circ R_t$ and $J_t=\det (D R_t)$.

In other words, $(t,W_{t})$ is the solution of $F(t,W_{t})=0\in H^{-1}_{0,Q_{t_0}}(Q_{t_0}(\Omega))$, where $H^{-1}_{0,Q_{t_0}}(Q_{t_0}(\Omega))$ stands for the dual space of $H^{1}_{0,Q_{t_0}}(Q_{t_0}(\Omega))$ with respect to the pivot space $L^2(Q_{t_0}(\Omega))$, with 
\begin{align*}F(t,W)&=-\mathrm{div}(A_t \nabla W)-(\Delta\hat\chi_{t}\circ R_t)J_t,\\
A_t&=J_t (DR_t)^{-1}(DR_t^T)^{-1}.
\end{align*}
The analyticity of $W_{t}$ with respect to $t$ follows by the implicit function theorem, since $F$ is analytic 
from $I\times H^1_{0,Q_{t_0}(\Gamma_D)}(Q_{t_0}(\Omega))$ into $H^{-1}_{0,Q_{t_0}}(Q_{t_0}(\Omega))$ and the operator $D_W F(t_0,W_{t_0})$ is an isomorphism of $H^1_{0,Q_{t_0}(\Gamma_D)}(Q_{t_0}(\Omega))$ into $H^{-1}_{0,Q_{t_0}}(Q_{t_0}(\Omega))$. 
Indeed, by linearity of $F$ with respect to $W$ and because $R_{t_0}$ is the identity, $D_W F(t_0,W_{t_0})Z$ is nothing else that $-\Delta Z$, which is an isomorphism from $H^{1}_{0,Q_{t_0}}(Q_{t_0}(\Omega))$ to $H^{-1}_{0,Q_{t_0}}(Q_{t_0}(\Omega))$, by Lax-Milgram's lemma.

This concludes the proof of Proposition~\ref{p:analytic}.

\subsection{Proof of Theorem~\ref{grille-tangent}}\label{sec:grille-tangent}

Notice that the assumption $L^2\not\in \pi^2\mathbb{Q}$ guarantees that the spectrum of 
the Laplace--Dirichlet operator on $\Omega$ is simple.

According to Proposition~\ref{p:P1BCP2BC} and Corollary~\ref{propagation}, we are left to prove that there exist $\rho_1,\rho_2\in [0,\delta)$ such that
$\mathcal{P}^{\rho_1}_{1,\mathrm{BC}}$ and $\mathcal{P}^{\rho_2}_{2,\mathrm{BC}}$ are nonempty. 
The second part of the statement of Corollary~\ref{propagation}, indeed, implies then that there exists $\rho\in(0,\delta)$ such that 
$\mathcal{P}^{\rho}_{1,\mathrm{BC}}\cap \mathcal{P}^{\rho}_{2,\mathrm{BC}}$ is residual.

The proof that $\mathcal{P}^{\rho}_{1,\mathrm{BC}}$ is nonempty for some $\rho\in(0,\delta)$ is made in Section~\ref{P1rho}, while it is shown in Section~\ref{P2rho} that $\mathcal{P}^{0}_{2,\mathrm{BC}}$ is nonempty.

\subsubsection{There exists $\rho\in (0,\delta)$ such that $\mathcal{P}^{\rho}_{1,\mathrm{BC}}$ is nonempty}\label{P1rho}

Let
$n$ denote a positive integer and
 $\chi_n\in \mathcal{C}^\infty(\Gamma_D^g)$
be defined by 
\begin{equation}\label{chin}
\chi_n(x_1,L)= \cosh (n  L)\sin \left({n x_1}\right).
\end{equation}
Notice that in this case the solution $V_0^{\Id,\chi_n}$ of \eqref{V0t}
 is explicitly given by
$$
V_0^{\Id,\chi_n}(x)=\sin(nx_1)\cosh(nx_2),\qquad x=(x_1,x_2)\in \Omega.
$$

\begin{proposition}\label{noname}
Let $L^2\not\in \pi^2 \mathbb{Q}$. If $n$ is odd, then $\chi_n$ is in $\mathcal{P}_{1,\mathrm{BC}}^\rho$ 
 for almost every $\rho\in (0,\delta)$.
 \end{proposition}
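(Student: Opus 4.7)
My plan is to exploit the real-analyticity in $\rho$ of the spectrum of $H_\rho = -\Delta + \rho V_0^{\Id,\chi_n}\,\Id$ on $\Omega$ with Dirichlet boundary conditions. Since $V_0^{\Id,\chi_n}\Id$ is a bounded symmetric perturbation and the unperturbed spectrum is simple (thanks to $L^2\notin\pi^2\mathbb{Q}$), Kato--Rellich theory provides global analytic branches $\rho\mapsto\lambda_j(\rho)$ on $\mathbb{R}$. Weak non-resonance fails at $\rho$ precisely when one of the countably many analytic functions
\begin{equation*}
g_{s,t}(\rho) = \lambda_{s_1}(\rho) - \lambda_{s_2}(\rho) - \lambda_{t_1}(\rho) + \lambda_{t_2}(\rho),\qquad s_1\neq s_2,\ (s_1,s_2)\neq(t_1,t_2),
\end{equation*}
vanishes. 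Each such $g_{s,t}$ that is not identically zero has a discrete zero set, so the union of the bad sets has Lebesgue measure zero in $(0,\delta)$. The task thus reduces to ruling out $g_{s,t}\equiv 0$ for every non-trivial quadruple.

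To do so, I would examine the Taylor expansion of $g_{s,t}$ at $\rho=0$. Indexing eigenvalues by $s_i=(j_i,k_i)$ so that $\lambda_{j,k}(0)=j^2+k^2\pi^2/L^2$, the vanishing $g_{s,t}(0)=0$ combined with $\pi^2/L^2\notin\mathbb{Q}$ forces the two arithmetic identities $j_1^2-j_2^2=j_3^2-j_4^2$ and $k_1^2-k_2^2=k_3^2-k_4^2$. Standard first-order perturbation theory applied with the eigenbasis $\phi_{j,k}(x)=\frac{2}{\sqrt{\pi L}}\sin(jx_1)\sin(k\pi x_2/L)$ and $V_0^{\Id,\chi_n}(x_1,x_2)=\sin(nx_1)\cosh(nx_2)$ gives, by a direct computation of the two separable integrals, the factored form
\begin{equation*}
\mu_{j,k}\;:=\;\partial_\rho\lambda_{j,k}(0) \;=\; K\,\frac{j^2}{4j^2-n^2}\,\frac{k^2}{n^2L^2+4k^2\pi^2},
\end{equation*}
for a positive constant $K=K(n,L)$ containing $\sinh(nL)$. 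The oddness of $n$ is essential here: it ensures $\int_0^\pi\sin(nx_1)\sin^2(jx_1)\,dx_1=\frac{4j^2}{n(4j^2-n^2)}$ is nonzero for every $j\geq1$, and prevents the denominator $4j^2-n^2$ from ever vanishing.

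The main obstacle will be the algebraic step of showing that, under the two zeroth-order Diophantine identities, the vanishing of
\begin{equation*}
g'_{s,t}(0)\;=\;K\bigl(a_{j_1}b_{k_1}-a_{j_2}b_{k_2}-a_{j_3}b_{k_3}+a_{j_4}b_{k_4}\bigr),\quad a_j=\tfrac{j^2}{4j^2-n^2},\ b_k=\tfrac{k^2}{n^2L^2+4k^2\pi^2},
\end{equation*}
is incompatible with the quadruple being non-trivial. I would split into sub-cases according to whether the unordered pairs $\{j_1,j_2\}$, $\{j_3,j_4\}$ coincide (and symmetrically on the $k$ side). The tensor-product structure $\mu_{j,k}=K\,a_j\,b_k$ clashes with the additive structure $\lambda_{j,k}(0)=j^2+k^2\pi^2/L^2$, and clearing denominators produces a polynomial identity in $L^2$ whose leading coefficient, combined with $\pi^2/L^2\notin\mathbb{Q}$ and the Diophantine constraints, will force $\{j_1,j_2\}=\{j_3,j_4\}$ and $\{k_1,k_2\}=\{k_3,k_4\}$. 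A final inspection of permutations, together with the simplicity of the unperturbed spectrum (to exclude the swap $(t_1,t_2)=(s_2,s_1)$, which would require $\lambda_{s_1}=\lambda_{s_2}$), collapses the quadruple to the trivial case, giving the required contradiction. Should the first-order coefficient accidentally vanish for some exceptional quadruple at the given $L$, one would fall back on higher-order Rayleigh--Schrödinger coefficients; the matrix elements $\langle\phi_{j',k'},V_0^{\Id,\chi_n}\phi_{j,k}\rangle$ remain nonzero on a rich set of index pairs (precisely those with $j+j'$ even, since $n$ is odd), preventing a perfect cancellation to all orders.
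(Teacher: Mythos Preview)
Your overall strategy --- Kato--Rellich analytic branches, first-order perturbation, then an algebraic argument that the zeroth- and first-order identities are jointly incompatible for nontrivial quadruples --- matches the paper's, and your factored formula $\mu_{j,k}=K\,a_j b_k$ is correct.

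The gap is in the algebraic step, which is the heart of the proof and which you leave as a hope rather than an argument. Your phrase ``a polynomial identity in $L^2$'' is confused: $L$ is a fixed real number, so evaluating a polynomial at $L^2$ yields a single scalar equation, not an identity from which coefficients can be read off. What the paper does is clear denominators in the first-order relation and view the result as $P(2\pi)=0$ for a polynomial $P$ with \emph{integer} coefficients; the \emph{transcendence} of $\pi$ --- not the mere irrationality of $\pi^2/L^2$ --- then forces $P\equiv 0$. Extracting the leading coefficient gives a purely arithmetic identity in the first indices $j_1,j_2,j_3,j_4$ alone which, combined with the Diophantine constraint $j_1^2-j_2^2=j_3^2-j_4^2$, reduces after a short manipulation to
\begin{equation*}
(j_1^2-j_2^2)\bigl(n^2-2(j_2^2+j_3^2)\bigr)=0.
\end{equation*}
Here the oddness of $n$ enters a second, decisive time, beyond the nonvanishing of $a_j$ that you mention: it excludes $n^2=2(j_2^2+j_3^2)$ and hence forces $j_1=j_2$, $j_3=j_4$. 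The residual identity in the second indices $k_i$ is then handled by a separate pole-comparison lemma: the equation
\begin{equation*}
\mu\Bigl(\frac{k_1^2}{k_1^2\xi+\gamma}-\frac{k_2^2}{k_2^2\xi+\gamma}\Bigr)=\frac{k_3^2}{k_3^2\xi+\gamma}-\frac{k_4^2}{k_4^2\xi+\gamma},
\end{equation*}
with $\xi=(2\pi)^2$ transcendental and $\mu\in\mathbb{Q}\setminus\{0\}$, is an equality of rational functions in $\xi$, so matching poles forces $\{k_1,k_2\}=\{k_3,k_4\}$ (or one of two degenerate coincidences), which together with $k_1^2-k_2^2=k_3^2-k_4^2$ closes the case analysis. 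None of these three concrete mechanisms --- transcendence of $\pi$, the parity obstruction $n^2\neq 2m$, and pole comparison --- appears in your sketch.

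Finally, your fallback to higher-order Rayleigh--Schr\"odinger coefficients is neither needed in the paper's argument nor worked out in yours; noting that off-diagonal matrix elements are ``nonzero on a rich set of index pairs'' is very far from showing that some higher derivative of $g_{s,t}$ is nonzero.
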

 \begin{proof}
The eigenpairs of the Laplace--Dirichlet operator on $ \Omega$ are naturally parameterized over $\mathbb{N}^2$ as follows: for every ${\bf j}=(j_1,j_2)\in\mathbb{N}^2$, let
$$\lambda_{\bf j}={j_1^2}  + j_2^2\frac{\pi^2}{L^2},\qquad \phi_{\bf j}(x)=\frac{2}{\sqrt{\pi  L}}\sin(j_1 x_1)\sin\left(j_2\frac{\pi}{ L}\right).$$

For every $\rho\in [0,\delta)$, denote by $(\lambda_{\bf j}(\rho))_{{\bf j}\in\mathbb{N}^2}$ the spectrum 
of $-\Delta+\rho V^{\Id,\chi_n}_0\Id$. Each function $\lambda_{\bf j}(\cdot)$ can be chosen to be analytic on $[0,\delta)$, with $\lambda_{\bf j}(0)=\lambda_{\bf j}={j_1^2}  + j_2^2\frac{\pi^2}{L^2}$.

Let us evaluate the derivative of each $\lambda_{\bf j}(\rho)$ at  $\rho=0$. 
Denote $\alpha_{\bf j}=\left.\frac{d\lambda_{\bf j}}{d\rho}\right\vert_{\rho=0}$. 
Recall that the derivative of the eigenvalues can be computed according to the formula 
\begin{equation}\label{derV}
\alpha_{\bf j}=\int_{\Omega}V_{0}^{\Id,\chi_n}(x_{1},x_{2})\phi_{\bf j}(x_{1},x_{2})^2\, dx_{1}\, dx_{2}
\end{equation}
(see, for instance, \cite{Henrot}).

We assume that
\begin{align}
\lambda_{\bf j}(0)-\lambda_{\bf k}(0)&=\lambda_{\bf j'}(0)-\lambda_{\bf k'}(0),\label{lambda}\\
\alpha_{\bf j}-\alpha_{\bf k}&=\alpha_{\bf j'}-\alpha_{\bf k'}\label{alpha}
\end{align}
for some $\bf j,\bf k,\bf j',\bf k'\in \mathbb{N}^2$ with $(\bf k,\bf j)\ne (k',j')$ and we show that $(\bf k,k')=(j,j')$. 
By analyticity we then have that 
$\lambda_{\bf j}(\rho)-\lambda_{\bf k}(\rho)=\lambda_{\bf j'}(\rho)-\lambda_{\bf k'}(\rho)$ only for isolated values of $\rho\in(0,\delta)$ and the proposition follows by the countability of $\mathbb{N}^2\times \mathbb{N}^2$.

According to \eqref{lambda}, we have
$$
\frac{j_{1}^{2}}{\pi^2}+\frac{j_{2}^2}{L^2}-\frac{k_{1}^2}{\pi^2}-\frac{k_{2}^2}{L^2}=\frac{j_{1}'^{2}}{\pi^2}+\frac{j_{2}'^2}{L^2}-\frac{k_{1}'^2}{\pi^2}-\frac{k_{2}'^2}{L^2}.
$$
Since $L^2\notin\pi^2 \mathbb{Q}$, we get
\begin{eqnarray}
j_{1}^{2}-k_{1}^2& =& j_{1}'^{2}-k_{1}'^2,\label{ind1}\\
j_{2}^{2}-k_{2}^2& =& j_{2}'^{2}-k_{2}'^2.\label{ind2}
\end{eqnarray}
Computing \eqref{derV} 
using the expression 
$$
V_{0}^{\Id,\chi_n}(x_{1},x_{2})=\sin (nx_{1})\cosh (nx_{2}),
$$
we have
\begin{eqnarray*}
\alpha_{\bf j}  & = & \frac{4}{L\pi}\int_{0}^\pi \sin (nx_{1})\sin (j_{1}x_{1})^2\, dx_{1} \int_{0}^L \cosh (nx_{2})\sin \left(\frac{j_{2}\pi x_{2}}{L}\right)^2\, dx_{2} \\
 & = & -\frac{32L\pi\sinh(nL)}{n^2}\frac{j_{1}^2}{4j_{1}^2-n^2}\frac{j_{2}^2}{(2\pi)^2j_{2}^2+n^2}.
\end{eqnarray*}
Hence, we can rewrite  \eqref{alpha} as
\begin{eqnarray}
\lefteqn{\frac{j_{1}^2j_{2}^2}{(4j_{1}^2-n^2)((2\pi)^2j_{2}^2+n^2)}-\frac{k_{1}^2k_{2}^2}{(4k_{1}^2-n^2)((2\pi)^2k_{2}^2+n^2)}}\nonumber \\
& &  = \frac{j_{1}'^2j_{2}'^2}{(4j_{1}'^2-n^2)((2\pi)^2j_{2}'^2+n^2)}-\frac{k_{1}'^2k_{2}'^2}{(4k_{1}'^2-n^2)((2\pi)^2k_{2}'^2+n^2)},\label{eqAlpha}
\end{eqnarray} 
from which we obtain, up to reduction to common denominator,
\begin{eqnarray*}
j_{1}^2j_{2}^2(4k_{1}^2-n^2)(4j_{1}'^2-n^2)(4k_{1}'^2-n^2)((2\pi)^2k_{2}^2+n^2)((2\pi)^2j_{2}'^2+n^2)((2\pi)^2k_{2}'^2+n^2) & & \\
-k_{1}^2k_{2}^2(4j_{1}^2-n^2)(4j_{1}'^2-n^2)(4k_{1}'^2-n^2)((2\pi)^2j_{2}^2+n^2)((2\pi)^2j_{2}'^2+n^2)((2\pi)^2k_{2}'^2+n^2) & & \\
-j_{1}'^2j_{2}'^2(4j_{1}^2-n^2)(4k_{1}^2-n^2)(4k_{1}'^2-n^2)((2\pi)^2j_{2}^2+n^2)((2\pi)^2k_{2}^2+n^2)((2\pi)^2k_{2}'^2+n^2) & & \\
+k_{1}'^2k_{2}'^2(4j_{1}^2-n^2)(4j_{1}'^2-n^2)(4k_{1}^2-n^2)((2\pi)^2j_{2}^2+n^2)((2\pi)^2j_{2}'^2+n^2)((2\pi)^2k_{2}^2+n^2) & =& 0.
\end{eqnarray*}
We can rewrite the latter expression in the form 
 $P(2\pi)=0$ where $P$ is an integer polynomial 
of degree at most 6. 

Since $2\pi$ is a transcendental number, 
we necessarily have
 $P=0$. In particular, its leading coefficient  
vanishes, that is, 
\begin{align}
0 = &  j_{2}^2k_{2}^2j_{2}'^2k_{2}'^2
(j_{1}^2(4k_{1}^2-n^2)(4j_{1}'^2-n^2)(4k_{1}'^2-n^2) -k_{1}^2(4j_{1}^2-n^2)(4j_{1}'^2-n^2)(4k_{1}'^2-n^2)\nonumber \\
  & - j_{1}'^2(4j_{1}^2-n^2)(4k_{1}^2-n^2)(4k_{1}'^2-n^2)+k_{1}'^2(4j_{1}^2-n^2)(4k_{1}^2-n^2)(4j_{1}'^2-n^2))  .\label{a6}
\end{align}
A simple computation leads to
\begin{equation}\label{a6eq1}
(k_{1}^2-k_{1}'^2)(4j_{1}^2-n^2)(4j_{1}'^2-n^2)=(j_{1}^2-j_{1}'^2)(4k_{1}^2-n^2)(4k_{1}'^2-n^2).
\end{equation}

Recall that we are assuming ${\bf (k,j)\ne (k',j')}$ and that we want to prove that ${\bf (k,k')=(j,j')}$. 
Assume for now that 
\begin{equation}\label{absu}
k_1\ne k_1'.
\end{equation}
 According to \eqref{ind1} we also have $j_1\ne j_1'$. 
Equation \eqref{a6eq1}, moreover, yields
$$4(k_{1}^2k_{1}'^2-j_{1}^2j_{1}'^2)=n^2(k_1^2+k_1'^2-j_{1}^2-j_{1}'^2).$$
Using again \eqref{ind1} on both sides of the equality we get 
$$2(k_{1}^2(k_{1}^2-j_1^2+j_1'^2)-j_{1}^2j_{1}'^2)=n^2(k_{1}^2-j_{1}^2),$$
which implies 
$$(k_1^2-j_1^2)(n^2-2(k_1^2+j_1'^2))=0.$$
Since $n$ is odd, we necessarily have $n^2\ne  2(k_1^2+j_1'^2)$, which implies (jointly with \eqref{ind1})
$$k_1=j_1,\qquad k_1'=j_1'.$$
Equation \eqref{eqAlpha} becomes
\begin{equation}
\frac{k_{1}^2}{4k_{1}^2-n^2}\left(   \frac{j_{2}^2}{(2\pi)^2j_{2}^2+n^2}-\frac{k_{2}^2}{(2\pi)^2k_{2}^2+n^2}\right) = 
\frac{k_{1}'^2}{4k_{1}'^2-n^2}\left(\frac{j_{2}'^2}{(2\pi)^2j_{2}'^2+n^2}-\frac{k_{2}'^2}{(2\pi)^2k_{2}'^2+n^2}\right).\label{eqAlpha2}
\end{equation}

We are going to use several times the following technical result. 
\begin{lemma}\label{technique}
Let $\xi$ be a transcendental number, and take $a,b,c,d,\gamma\in \mathbb{N}$ and $\mu\in\mathbb{Q} \setminus\{0\}$.
If
\begin{equation}\label{fraz}
\frac{a}{a\xi+\gamma}-\frac{b}{b\xi+\gamma}=\mu\left( \frac{c}{c\xi+\gamma}-\frac{d}{d\xi+\gamma}\right)
\end{equation}
 then one of the properties holds true: (i)  $(a,c)=(b,d)$, (ii) $\mu=1$ and $(a,b)=(c,d)$, (iii) 
$\mu=-1$ and $(a,b)=(d,c)$.
 \end{lemma}
\begin{proof}
The proof consists simply in noticing that \eqref{fraz} is equivalent to the equality 
$$\frac{a}{a X+\gamma}-\frac{b}{bX+\gamma}=\mu\left( \frac{c}{cX+\gamma}-\frac{d}{dX+\gamma}\right)$$
between rational functions in the variable $X$ and in comparing their poles.  
\end{proof}

Applying the lemma to the identity  \eqref{eqAlpha2}, we get that either $(j_2,j_2')=(k_2,k_2')$, and hence ${\bf (k,k')=(j,j')}$ as desired,
or $\{j_2,k_2\}=\{j_2',k_2'\}$. In the latter case, moreover, \eqref{ind2} implies that $(j_2,k_2)=(j_2',k_2')$, which yields 
$$\frac{k_{1}^2}{4k_{1}^2-n^2}=\frac{k_{1}'^2}{4k_{1}'^2-n^2},$$
 since we are in case (ii) of Lemma~\ref{technique}.  
Since the map $x\mapsto x^2/(4x^2-n^2)$ is injective on $[1,+\infty)$ then $k_1=k_1'$, which contradicts \eqref{absu}.

Let now 
\begin{equation}\label{absu2}
k_1=k_1',\qquad k_2\ne k_2'.
\end{equation}
Identity \eqref{ind1} implies that $j_1=j_1'$ and equation \eqref{eqAlpha} simplifies to
\begin{equation}
\frac{j_{1}^2}{4j_{1}^2-n^2}\left(   \frac{j_{2}^2}{(2\pi)^2j_{2}^2+n^2}-\frac{j_{2}'^2}{(2\pi)^2j_{2}'^2+n^2}\right) = 
\frac{k_{1}^2}{4k_{1}^2-n^2}\left(\frac{k_{2}^2}{(2\pi)^2k_{2}^2+n^2}-\frac{k_{2}'^2}{(2\pi)^2k_{2}'^2+n^2}\right).\label{eqAlpha3}
\end{equation}
Let us apply again Lemma~\ref{technique}. Case (i) is ruled out by assumption \eqref{absu2}. Hence, $\{j_2,j_2'\}=\{k_2,k_2'\}$ and it follows  from \eqref{ind2}, using the same argument as before, that $(j_2,j_2')=(k_2,k_2')$ and $j_1=k_1$.
We conclude also in this second case that ${\bf (k,k')=(j,j')}$ and this concludes the proof of Proposition~\ref{noname}.
\end{proof}

\subsubsection{$\mathcal{P}^{0}_{2,\mathrm{BC}}$ is nonempty}\label{P2rho}

Let $\chi_n$ be defined as in the previous section (see equation \eqref{chin}).

\begin{proposition}\label{Sinfty}
If $n$ is even then $\chi_n\in \mathcal{P}^{0}_{2,\mathrm{BC}}$.
\end{proposition}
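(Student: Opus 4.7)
The plan is to compute the matrix elements $\int_\Omega V_{0}^{\Id,\chi_n}\phi_{\mathbf{k}}\phi_{\mathbf{j}}\,dx$ explicitly in the eigenbasis of the Dirichlet Laplacian on $\Omega=(0,\pi)\times(0,L)$ and to read off the resulting non-vanishing pattern as a connectedness chain. At $\rho=0$ the relevant operator is just the Dirichlet Laplacian, whose eigenpairs are indexed by $\mathbf{j}=(j_1,j_2)\in\mathbb{N}^2$ via $\lambda_{\mathbf{j}}=j_1^2+j_2^2\pi^2/L^2$ and $\phi_{\mathbf{j}}(x)=\tfrac{2}{\sqrt{\pi L}}\sin(j_1 x_1)\sin(j_2\pi x_2/L)$. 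The hypothesis $L^2\notin\pi^2\mathbb{Q}$ inherited from Theorem~\ref{grille-tangent} makes $\mathbf{j}\mapsto\lambda_{\mathbf{j}}$ injective, so the spectrum is simple and labelling eigenfunctions by $\mathbb{N}$ corresponds to a bijection $\mathbb{N}\leftrightarrow\mathbb{N}^2$. It will therefore suffice to prove that
\[
\widetilde S=\left\{(\mathbf{k},\mathbf{j})\in(\mathbb{N}^2)^2\mid \int_\Omega V_{0}^{\Id,\chi_n}\phi_{\mathbf{k}}\phi_{\mathbf{j}}\,dx\neq 0\right\}
\]
couples every pair of multi-indices.

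Since $V_{0}^{\Id,\chi_n}(x)=\sin(nx_1)\cosh(nx_2)$, the integral factors as a product $I_1(k_1,j_1)\,I_2(k_2,j_2)$ of one-dimensional integrals. For $I_2$ I would apply the product-to-sum identity to $\sin(k_2\pi x_2/L)\sin(j_2\pi x_2/L)$ and integrate each resulting cosine against $\cosh(nx_2)$ using the standard antiderivative; the explicit expression one obtains is, up to a nonzero prefactor and a sign $(-1)^{k_2+j_2}$, proportional to $k_2 j_2$, hence nonzero whenever $k_2,j_2\geq 1$. For $I_1$ I would expand $\sin(nx_1)\sin(k_1x_1)\sin(j_1x_1)$ as a linear combination of $\sin((n\pm k_1\pm j_1)x_1)$; since $\int_0^\pi \sin(mx)\,dx$ vanishes whenever $m$ is even, $I_1$ vanishes unless $n+k_1+j_1$ is odd, which---as $n$ is even---is equivalent to $k_1+j_1$ being odd. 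In that case a short computation yields
\[
I_1(k_1,j_1)=\frac{-4n\,k_1 j_1}{(n^2-(k_1-j_1)^2)(n^2-(k_1+j_1)^2)},
\]
which is nonzero.

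Consequently $\widetilde S=\{(\mathbf{k},\mathbf{j})\in(\mathbb{N}^2)^2\mid k_1+j_1\text{ is odd}\}$. Partitioning $\mathbb{N}^2$ according to the parity of the first coordinate, $\widetilde S$ is exactly the edge set of the complete bipartite graph between the two parts, so any two multi-indices can be joined by a path in $\widetilde S$ of length at most two (passing, if needed, through an auxiliary multi-index whose first coordinate has opposite parity). This shows that $\widetilde S$ is a connectedness chain, hence $\chi_n\in\mathcal{P}^0_{2,\mathrm{BC}}$.

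The hard part will be verifying that $I_1$ does not vanish when $k_1+j_1$ is odd: this is where the parity of $n$ enters twice, first by selecting the correct parity class of coupled pairs, and then by ensuring that the denominators $n^2-(k_1\pm j_1)^2$ are odd and in particular nonzero. The rest is a routine (if slightly lengthy) trigonometric computation, followed by an essentially immediate connectedness argument on a complete bipartite graph.
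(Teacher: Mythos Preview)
Your proposal is correct and follows essentially the same approach as the paper: factor the matrix element into a product of one-dimensional integrals $A_{n\mathbf{jk}}B_{n\mathbf{jk}}$, show that the $x_2$-integral never vanishes while the $x_1$-integral vanishes exactly when $j_1+k_1$ has the same parity as $n$, and conclude connectedness via the complete bipartite structure on the parity classes of the first index. The paper presents the explicit formulas for both factors and the connectedness argument in the same way, so there is no substantive difference between the two proofs.
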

\begin{proof}
We use below the same parameterization on $\mathbb{N}^2$ of eigenpairs of the Laplace--Dirichlet operator as in Section~\ref{P1rho}.
Notice that the notion of connectedness chain introduced in Definition~\ref{d-chain} and \eqref{eq:S} and \eqref{eq:Srho} naturally extends to subsets of $(\mathbb{N}^2)^2$. 
 Then, $\chi_n$ is in $\mathcal{P}^{0}_{2,\mathrm{BC}}$ if and only if
$$\left\{({\bf j},{\bf k})\in (\mathbb{N}^2)^2\mid  \int_{\Omega} V^{\Id,\chi_n}_0(x)\phi_{\bf k}(x)\phi_{\bf j}(x)dx \neq 0\right\}$$
is a connectedness chain.

In order to prove that $S_0(\chi_n)=S(\Omega,\Gamma_D^g,\chi_n)$ 
is a connectedness chain, we are led to compute the quantities
$$
\int_\Omega V^{\Id,\chi_n}_0(x)\phi_{\bf j}(x)\phi_{\bf k}(x)dx=\frac{4}{L\pi}A_{n{\bf jk}}B_{n{\bf jk}},\qquad {\bf j}, {\bf k}\in (\mathbb{N}^2)^2,
$$
with
$$
A_{n{\bf jk}}=\int_0^\pi \sin (nx_1)\sin (j_1x_1)\sin (k_1x_1)dx_1
$$
and
$$
B_{n{\bf jk}}=\int_0^{L} \cosh (nx_2)\sin \left(\frac{j_2\pi x_2}{L}\right)\sin \left(\frac{k_2\pi x_2}{L}\right)dx_2.
$$
A tedious but straightforward computation proves that
$$
A_{n{\bf jk}}=\left\{\begin{aligned}
&0 && \textrm{if }j_1+k_1+n\textrm{ is even,}\\
\displaystyle &\frac{-4j_1k_1n}{(j_1+k_1-n)(j_1-k_1+n)(-j_1+k_1+n)(j_1+k_1+n)} && \textrm{otherwise,} 
\end{aligned}\right.
$$
whereas
$$
B_{n{\bf jk}}=\frac{2(-1)^{j_2+k_2} L^2n\pi^2j_2k_2\sinh(n  L)}{(\pi^2(j_2-k_2)^2+n^2)(\pi^2(j_2+k_2)^2+n^2)}.
$$
One immediately sees that the coefficients $B_{n{\bf jk}}$ cannot vanish. As for the coefficients $A_{n{\bf jk}}$, if $n$ is 
even then $A_{n{\bf jk}}$ vanishes if and only if $j_1$ and $k_1$ have the same parity.
Then  $S(\Omega,\Gamma_D^g,\chi_n)=\{({\bf j},{\bf k})\mid j_1+k_1 \mbox{ is odd}\}$ is a connectedness chain: indeed, given ${\bf j}$ and ${\bf k}$ in ${\mathbb{N}}^2$, either $j_1+k_1$ is odd, and then $({\bf j},{\bf k})\in S(\Omega,\Gamma_D^g,\chi_n)$, or $j_1+k_1$ is even and then $({\bf j},{\bf j}')$ and $({\bf j}',{\bf k})$ are in $S(\Omega,\Gamma_D^g,\chi_n)$ with ${\bf j}'=(j_1+1,j_2)$. 
\end{proof}

Notice that, conversely, if $n$ is odd 
then $A_{n{\bf jk}}$ vanishes if and only if $j_1+k_1$ is odd.
Hence, $S(\Omega,\Gamma_D^g,\chi_n)$ cannot couple ${\bf j}$ and ${\bf k}$ when $j_1+k_1$ is odd. 
Therefore, $\chi_n\not\in \mathcal{P}^{0}_{2,\mathrm{BC}}$ for $n$ odd.

\subsection{Proof of Theorem~\ref{grille-deformation}}\label{sec:grille-deformation}

According to Proposition \ref{p:P1P2} and Corollary~\ref{propagation}, we are left to prove that  $\mathcal{P}_1$ is nonempty. Indeed, we already showed in the previous section that $\mathcal{P}_{2,\mathrm{BC}}^0$ is nonempty, which implies that $\mathcal{P}_2$, which contains $\{(\Omega,\Gamma_D^g,\chi)\mid \chi\in \mathcal{P}_{2,\mathrm{BC}}^0\}$, is nonempty as well. 
We actually prove directly that $\mathcal{P}_1$ is residual, based on a general result proved in \cite{privat-sigalotti}.

\begin{lemma}\label{l:P1residual}
The set $\mathcal{P}_1$ is residual. 
\end{lemma}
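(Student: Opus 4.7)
The plan is to observe that membership in $\mathcal{P}_1$ depends only on the shape $Q(\Omega)$, not on $\chi$ or on the distinguished portion $Q(\Gamma_D^g)$ of the boundary, and to reduce the lemma to a genericity statement for the Laplace--Dirichlet spectrum under $\mathcal{C}^1$-diffeomorphic shape perturbations. More precisely, writing $\Lambda(Q(\Omega))=(\mu_k(Q))_{k\in\mathbb{N}}$, the condition ``$\Lambda(Q(\Omega))$ is non-resonant'' is
$$
\forall N\in\mathbb{N},\ \forall (q_1,\dots,q_N)\in\mathbb{Q}^N\setminus\{0\},\quad \sum_{k=1}^N q_k \mu_k(Q)\neq 0.
$$
This is a countable intersection over $(N,(q_k))$ of the sets $\mathcal{O}_{N,q}$ where the corresponding scalar functional on $Q$ does not vanish.

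First I would show that each $\mathcal{O}_{N,q}$ is open in $\mathcal{P}$. Since eigenvalues of the Dirichlet Laplacian on $Q(\Omega)$ depend continuously on $Q\in\mathrm{Diff}^1_0$ (via the pullback to the fixed domain $\Omega$, where the resulting symmetric second-order operator depends continuously on $Q$ for the $\mathcal{C}^1$ topology), each $\mu_k$ is continuous in $Q$, and hence so is $\sum q_k\mu_k$. This gives $\mathcal{P}_1$ as a $G_\delta$ set.

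Next I would prove density. I would invoke the genericity result for non-resonance of the Dirichlet spectrum under shape deformation proved in \cite{privat-sigalotti}: the set of $Q\in\mathrm{Diff}^1_0$ for which $\Lambda(Q(\Omega))$ is non-resonant is residual (indeed dense). Projecting up to $\mathcal{P}$, this implies density of each $\mathcal{O}_{N,q}$, since $\chi$ plays no role in the condition: given any $(Q(\Omega),Q(\Gamma_D^g),\chi)\in\mathcal{P}$ and any neighbourhood, we can perturb $Q$ to make $\Lambda(Q(\Omega))$ non-resonant while keeping $\chi$ fixed (possibly renormalizing it to remain in $\mathcal{C}^1_0(Q(\Gamma_D^g))$ via the same diffeomorphism). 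Alternatively, one can re-derive the density by analytic propagation along curves $t\mapsto Q_t$ in $\mathrm{Diff}^1_0$, using Rellich's theorem so that each $\mu_k(Q_t)$ is real-analytic in $t$; the function $t\mapsto \sum_k q_k \mu_k(Q_t)$ is then real-analytic, and it suffices to exhibit a single diffeomorphism at which it is nonzero to conclude that its zero set is discrete, hence $\mathcal{O}_{N,q}$ is dense along the curve.

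The only potentially delicate step is producing, for every $(N,q)$, a domain with $\sum q_k \mu_k\neq 0$; this is precisely what is established in \cite{privat-sigalotti}, so I would simply invoke it. The conclusion follows: $\mathcal{P}_1=\bigcap_{N,q}\mathcal{O}_{N,q}$ is a countable intersection of open dense subsets of the Baire space $\mathcal{P}$, hence residual.
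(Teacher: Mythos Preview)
Your approach is essentially the same as the paper's: write $\mathcal{P}_1$ as the countable intersection $\bigcap_{N,q}\mathcal{O}_{N,q}$, observe openness of each $\mathcal{O}_{N,q}$ by continuity of the Dirichlet eigenvalues, and obtain density by analytic propagation once one exhibits, for each $(N,q)$, a single domain with $\sum_{k=1}^N q_k\mu_k\neq 0$. The paper likewise invokes \cite[Theorem~2.3]{privat-sigalotti} as the abstract reduction.

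The one place where the paper does more than you is the step you label ``potentially delicate'' and then defer entirely to \cite{privat-sigalotti}. The paper does \emph{not} pull this existence from that reference; it constructs the domain explicitly. Namely, it takes a thin rectangle $\hat\Omega=(0,\pi)\times(0,\hat L)$ with $\hat L>\pi\ell$, so that the first $\ell$ Dirichlet eigenvalues are $\lambda_j=1+j^2\pi^2/\hat L^2$ with eigenfunctions $\phi_j$ supported on the lowest horizontal mode. It then perturbs one vertical side by a compactly supported vector field $X$, uses the Hadamard formula $\dot\lambda_j(0)=-\int_{\partial\hat\Omega}(\partial_\nu\phi_j)^2(X\cdot\nu)$, and exploits the linear independence of $x_2\mapsto\sin^2(j\pi x_2/\hat L)$ to choose $X$ so that $\sum_j q_j\dot\lambda_j(0)\neq 0$. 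This is the actual content of the proof; \cite{privat-sigalotti} is about generic linear independence of \emph{squared eigenfunctions}, and its Theorem~2.3 only provides the abstract residual-from-pointwise-nonvanishing framework, not the eigenvalue computation you need here. So your outline is correct, but the sentence ``this is precisely what is established in \cite{privat-sigalotti}'' should be replaced by an explicit construction along the lines above.

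A minor remark: your parenthetical about ``renormalizing $\chi$'' is the right instinct, since $\chi$ lives on $Q(\Gamma_D^g)$ and must be transported along with $Q$; the paper sidesteps this by parametrizing boundary data via $\varphi=\chi\circ Q\in\mathcal{C}^1_0(\Gamma_D^g)$, which is independent of $Q$.
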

\begin{proof}
Thanks to \cite[Theorem 2.3]{privat-sigalotti}, the lemma is proved if we show that 
for every $\ell\in\mathbb{N}$ and $q=(q_1,\dots,q_\ell)\in \mathbb{Q}^\ell \setminus \{0\}$
there exists $(Q(\Omega),Q(\Gamma_D^g),\chi)\in \mathcal{P}$ such that 
the first $\ell$ eigenvalues $\lambda_1,\dots,\lambda_\ell$ of the Dirichlet--Laplace operator on $Q(\Omega)$ are simple and 
$\sum_{j=1}^\ell q_j \lambda_j\ne 0$.

Fix $\ell\in\mathbb{N}$ and $q=(q_1,\dots,q_\ell)\in \mathbb{Q}^\ell \setminus \{0\}$. 
Let $\hat L>0$ be such that $\pi^2 \ell^2<\hat L^2$ and consider
$\hat \Omega=(0,\pi)\times (0,\hat L)$.
The choice of $\hat L$ is such that the $\ell$ smallest eigenvalues of $-\Delta$ on $\hat \Omega$ with Dirichlet boundary conditions are $\lambda_j=1+j^2 \pi^2/\hat L^2$, which are  simple and whose corresponding eigenfunctions 
are (up to normalization) 
$$\phi_j(x_1,x_2)=\frac{2\sin(x_1)\sin(j \pi x_2/\hat L)}{\sqrt{\pi \hat L}}.$$

Let $X$ be a $\mathcal{C}^1$ vector field on $\R^2$ with compact support intersecting $\{0\}\times(0,\hat L)$ but not any other side of $\hat \Omega$. For $t_0>0$ small enough and $t\in (-t_0,t_0)$, 
$I+tX$ is a diffeomorphism between $\hat\Omega$ and its image, which we will denote by $\hat\Omega_t$.

Denote by $(\lambda_j(t))_{j\in\mathbb{N}}$ the spectrum 
of the  Laplace--Dirichlet operator on 
$\hat\Omega_t$. According to Rellich's theorem (see \cite{kato,rellich}), each function $\lambda_j(\cdot)$ can be chosen to be analytic on $(-t_0,t_0)$. Moreover, up to reducing $t_0$, we can assume that $\lambda_1(t),\dots,\lambda_\ell(t)$ are simple for $t\in(-t_0,t_0)$. 

It is well known that
$$
\dot\lambda_j(0)=-\int_{\partial \hat \Omega}\left(\frac{\partial \phi_j}{\partial \nu }\right)^2(X\cdot \nu)
$$
for every $j$ such that $\lambda_j(0)$ is simple
(see, for instance, \cite{Henrot-Pierre}).
Notice that 
$$\left(\frac{\partial \phi_j}{\partial \nu }\right)^2=\frac{4\sin^2(j \pi x_2/\hat L)}{\pi \hat L}$$
on $\{0\}\times(0,\hat L)$ for $j=1,\dots,\ell$.

Henceforth, since $x_2\mapsto \sin^2(j\pi x_2/\hat L)$, $j=1,\dots,\ell$, are linearly independent functions on $(0,\hat L)$ (as it  follows from the trigonometric formula $\sin^2 \theta=1-\cos(2\theta)/2$ and by injectivity of Fourier series), then 
we can choose the vector field  $X$ in such a way that 
$$\sum_{j=1}^\ell q_j \dot\lambda_j(0)\ne 0.$$  
Hence, there exists $t\in(-t_0,t_0)$ such that 
$\sum_{j=1}^\ell q_j \lambda_j(t)\ne 0$ and the lemma is proved taking $Q=\Id+t X$.
\end{proof}

\section{Generalizations}\label{s-generalizations}

In this section we provide some generalizations of the results obtained in Theorems~\ref{grille-tangent} and \ref{grille-deformation}. In Section~\ref{s:partial} we consider gates which do not cover the entire upper side of the rectangle $\Omega$. In Section~\ref{s:nonlinear} we include some physically motivated nonlinear correction to the coupling term between the Poisson and the Schr\"odinger equation.

\subsection{Partial gate with linear coupling}\label{s:partial}

The model that we consider here is the following, 
\begin{equation}\label{schro1}
\left\{\begin{aligned}
i\partial_t\psi (t,x)&= -\Delta \psi (t,x)+V(t,x)\psi(t,x),\quad  && (t,x)\in  \mathbb{R}_+\times\Omega, \\
-\Delta V(t,x) &= 0, && (t,x)\in  \mathbb{R}_+\times\Omega, \\
\psi (t,x)&=0, && (t,x)\in \mathbb{R}_+\times \partial \Omega,\\
V (t,x)&=V_g(t)\chi(x), && (t,x)\in \mathbb{R}_+\times \Gamma_D^g,\\
V (t,x)&=0, && (t,x)\in \mathbb{R}_+\times \Gamma_D^s\cup \Gamma_D^d,\\
\frac{\partial V}{\partial \nu }(t,x)&=0, && (t,x)\in \mathbb{R}_+\times \Gamma_N.
\end{aligned}\right.
\end{equation}
The set $\Omega$ still denotes the rectangle $(0,\pi)\times (0,L)$, $L>0$.  
The gate $\Gamma_D^g$ is now reduced to a compactly contained subinterval of $[0,\pi]\times \{L\}$, while 
$\Gamma_N=\Gamma^1_N\cup \Gamma^2_N \cup \Gamma_N^3$ is now the union of three connected components, as illustrated in Figure~\ref{figTransistor2}.

\vspace{1cm}
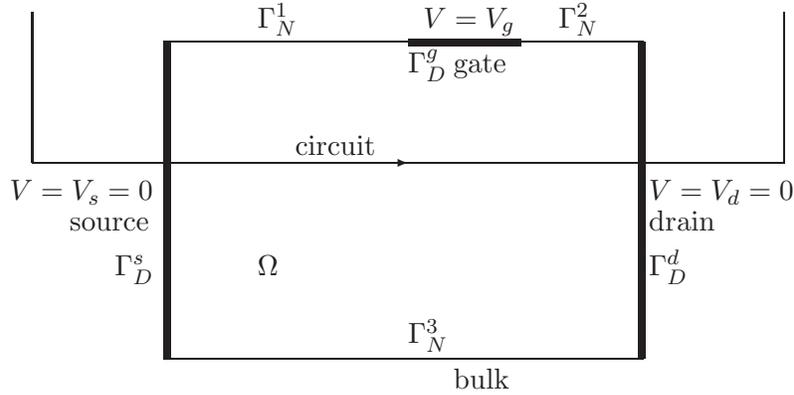
\begin{figure}[!ht]
 \begin{center}
\fbox{
\begin{picture}(6,4)
\put(1,1){$\Omega$}
\put(3.2,4.3){$V=V_g$}
\put(3.6,3.7){gate}
\put(-2.3,2){$V=V_s=0$}
\put(-1.5,1.6){source}
\put(6.2,2){$V=V_d=0$}
\put(6.2,1.6){drain}
\put(1.5,2.6){circuit}
\put(3.6,-0.5){bulk}
\put(1,4.3){$\Gamma_N^1$}
\put(5,4.3){$\Gamma_N^2$}
\put(3.0,3.7){$\Gamma_D^g$}
\put(-0.9,1){$\Gamma_D^s$}
\put(6.2,1){$\Gamma_D^d$}
\put(3,0.1){$\Gamma_N^3$}
\put(-2,2.5){\line(0,1){2}}
\put(-2,2.5){\vector(1,0){5}}
\put(3,2.5){\line(1,0){5}}
\put(8,2.5){\line(0,1){2}}
\linethickness{0.9mm}
\put(-0.2,-0.1){\line(0,1){4.2}}  
\put(6.1,-0.1){\line(0,1){4.2}}  
\put(3,4.1){\line(1,0){1.5}}
\end{picture}}
\end{center}
\caption{Representation of the transistor with partial gate}\label{figTransistor2}
\end{figure}

\bigskip

As in the previous sections, we can consider a deformation of $\Omega$ by introducing a transformation $Q\in \mathrm{Diff}^1_0$, with $\chi\in \mathcal{C}^1(Q(\Gamma_D^g))$. 
Similarly to what is done in Section~\ref{enonces} and with a slight abuse of notations, we denote by $\mathcal{P}$ the  class of corresponding problems, identified with  
\begin{equation}\label{def-Pbis}
\mathcal{P}=\{(Q(\Omega),Q(\Gamma_D^g),\chi)\mid 
(Q(\Omega),Q(\Gamma_D^g))\in \Sigma\quad \mbox{and}\quad
\chi\in \mathcal{C}^1(Q(\Gamma^g_D))\},
\end{equation}
where
$$\Sigma=\{{(Q(\Omega),Q(\Gamma_D^g))}\mid Q\in \mathrm{Diff}^1_0\}.$$ 

We obtain the following result. 
\begin{theorem}\label{grille-deformation-bis}
For a generic element of $\mathcal{P}$, the control problem \eqref{schro1} is approximately controllable.
\end{theorem}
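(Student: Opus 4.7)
The plan is to follow the structure of the proof of Theorem~\ref{grille-deformation}: prove that the natural analogs of $\mathcal{P}_1$ and $\mathcal{P}_2$ for the partial-gate class $\mathcal{P}$ are nonempty, hence residual by Corollary~\ref{propagation}, and then invoke Theorem~\ref{t-relax}. First I would check that Proposition~\ref{p:analytic}, Corollary~\ref{propagation} and Proposition~\ref{p:P1P2} transfer verbatim: the Poisson problem with the new Dirichlet/Neumann decomposition remains variationally well-posed on $H^1_{0,\Gamma_D}(\Omega)$ with $\Gamma_D=\Gamma_D^s\cup\Gamma_D^d\cup\Gamma_D^g$ still of positive $(n-1)$-measure (hence coercivity by Poincaré), and the change-of-coordinates/implicit-function argument is unchanged. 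Non-emptiness of $\mathcal{P}_1$ is then immediate from the proof of Lemma~\ref{l:P1residual}, since the vector field $X$ used there is supported near $\{0\}\times(0,\hat L)$, which is disjoint from the gate regardless of how the latter sits in the upper side.

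The main task is to exhibit one element of $\mathcal{P}_2$. I would fix $Q=\mathrm{Id}$ (deforming first by a vertical dilation if necessary so that $L^2\not\in\pi^2\mathbb{Q}$, hence the Laplace--Dirichlet spectrum is simple) and reason by duality. For $\mathbf{j},\mathbf{k}\in\mathbb{N}^2$, let $u_{\mathbf{jk}}$ solve the adjoint mixed problem $-\Delta u=\phi_{\mathbf{j}}\phi_{\mathbf{k}}$ in $\Omega$ with $u=0$ on $\Gamma_D$ and $\partial u/\partial\nu=0$ on $\Gamma_N$. A Green-identity computation, using $-\Delta V_0^{\mathrm{Id},\chi}=0$ and matching the boundary conditions on each piece of $\partial\Omega$, yields
$$
c_{\mathbf{jk}}(\chi):=\int_\Omega V_0^{\mathrm{Id},\chi}\,\phi_{\mathbf{j}}\phi_{\mathbf{k}}\,dx=-\int_{\Gamma_D^g}\chi\,\frac{\partial u_{\mathbf{jk}}}{\partial\nu}\,d\sigma,
$$
so $\chi\mapsto c_{\mathbf{jk}}(\chi)$ is a continuous linear functional on $\mathcal{C}^1(\Gamma_D^g)$, nontrivial precisely when $\partial u_{\mathbf{jk}}/\partial\nu\not\equiv 0$ on $\Gamma_D^g$. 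If this non-vanishing holds for every $(\mathbf{j},\mathbf{k})$, then each zero set $\{\chi:c_{\mathbf{jk}}(\chi)=0\}$ is a proper closed hyperplane, and the countable intersection of their complements is a residual set of $\chi$ for which $S(\Omega,\Gamma_D^g,\chi)=(\mathbb{N}^2)^2$ is trivially a connectedness chain.

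The technical core of the proof---and what I expect to be the main obstacle---is the non-degeneracy claim $\partial u_{\mathbf{jk}}/\partial\nu\not\equiv 0$ on $\Gamma_D^g$ for all $(\mathbf{j},\mathbf{k})$. Argue by contradiction: if it vanishes on $\Gamma_D^g$, then combined with the Neumann condition already in force on $\Gamma_N^1\cup\Gamma_N^2$ it vanishes on the entire upper side, so $u_{\mathbf{jk}}$ coincides with the unique solution $v^*$ of the auxiliary ``full-Neumann top'' problem (Dirichlet on $\Gamma_D^s\cup\Gamma_D^d$, Neumann on the bottom and on the whole top). This auxiliary problem has uniform boundary conditions along each side, so $v^*$ is real-analytic on $\overline{\Omega}$ away from the four rectangle corners; in particular $x_1\mapsto v^*(x_1,L)$ is real-analytic on the open interval $(0,\pi)$. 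The original Dirichlet constraint $u_{\mathbf{jk}}=0$ on $\Gamma_D^g=(a,b)\times\{L\}$ would then force $v^*(\cdot,L)\equiv 0$ on $(a,b)$, and hence on $(0,\pi)$ by the identity theorem. I would conclude by a direct Fourier expansion of $v^*$ in the eigenbasis $\psi_{mn}(x_1,x_2)=\sin(m x_1)\cos(n\pi x_2/L)$ of the auxiliary operator: a short computation using the product-to-sum formulas shows that the only Fourier coefficients surviving in $f=\phi_{\mathbf{j}}\phi_{\mathbf{k}}$ have $n\in\{|j_2-k_2|,j_2+k_2\}$, so the coefficient of $\sin(m x_1)$ in $v^*(x_1,L)$ is proportional to $A_{m\mathbf{jk}}\bigl(\mu_{m,n_2}^{-1}-\mu_{m,n_1}^{-1}\bigr)$ with $A_{m\mathbf{jk}}=\int_0^\pi \sin(j_1 x_1)\sin(k_1 x_1)\sin(m x_1)\,dx_1$; since $j_2,k_2\geq 1$ implies $n_1\ne n_2$ the parenthesis never vanishes, and $A_{m\mathbf{jk}}\ne 0$ for infinitely many $m$, so $v^*(\cdot,L)\not\equiv 0$---a contradiction. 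Once this non-degeneracy is established, the residuality of $\mathcal{P}_1\cap\mathcal{P}_2$ follows from Corollary~\ref{propagation}, and Theorem~\ref{t-relax} delivers the generic approximate controllability asserted in Theorem~\ref{grille-deformation-bis}.
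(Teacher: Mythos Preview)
Your proposal is correct and follows a genuinely different route from the paper for the key step of showing that $\mathcal{P}_2$ is nonempty. The paper argues by approximation: it fixes a rectangle $\widetilde\Omega$ with simple spectrum, takes an increasing sequence of partial gates $\Gamma^g_{p,D}$ exhausting the full top side with boundary data $\eta_p=\chi_n|_{\Gamma^g_{p,D}}$ ($n$ even), and proves a continuity lemma (Lemma~\ref{propCont}) showing that the associated potentials $V_0^p$ converge to $V_0^{\mathrm{Id},\chi_n}$ strongly in $H^1(\widetilde\Omega)$. Since the full-gate coupling integrals are already known to be nonzero from Proposition~\ref{Sinfty}, they remain nonzero for $p$ large, and density in the subclass $\widetilde{\mathcal P}$ follows by analytic propagation. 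Your approach instead works with a fixed gate and a fixed rectangle: you represent each coupling coefficient by the adjoint-state identity $c_{\mathbf{jk}}(\chi)=-\int_{\Gamma_D^g}\chi\,\partial_\nu u_{\mathbf{jk}}$, reduce the question to the non-degeneracy $\partial_\nu u_{\mathbf{jk}}\not\equiv 0$ on $\Gamma_D^g$, and settle that by an over-determination argument (simultaneous Cauchy data on $\Gamma_D^g$ force $u_{\mathbf{jk}}$ to coincide with the separable-boundary solution $v^*$, whose top trace you compute explicitly by Fourier series and show to be nonzero). Your Fourier claim is in fact exact: the normalization factors in the $n=0$ mode conspire so that the coefficient of $\sin(m x_1)$ in $v^*(\cdot,L)$ is always a nonzero multiple of $A_{m\mathbf{jk}}(\mu_{m,n_1}^{-1}-\mu_{m,n_2}^{-1})$, in both the cases $j_2=k_2$ and $j_2\ne k_2$.

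What each approach buys: the paper's route is shorter given the infrastructure already in place (it recycles Proposition~\ref{Sinfty} and needs only a soft $H^1$-convergence lemma), whereas yours is self-contained, does not invoke the full-gate result at all, and actually proves more --- for \emph{any} fixed partial gate and a residual set of $\chi$, \emph{every} coupling integral is nonzero, not merely enough of them to form a connectedness chain. One point you should make explicit when writing it up: the Dirichlet--Neumann transition at the endpoints of $\Gamma_D^g$ limits the regularity of both $V_0^{\mathrm{Id},\chi}$ and $u_{\mathbf{jk}}$ (typically $H^{3/2-\varepsilon}$ locally, with $\partial_\nu u_{\mathbf{jk}}$ only integrable and not square-integrable near those points), so the Green-identity representation and the identification $u_{\mathbf{jk}}=v^*$ are cleanest when justified through the variational formulations rather than classical integration by parts.
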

\begin{proof}
The proof consists in an adaptation of 
the one of Theorem~\ref{grille-deformation}. We denote by $\mathcal{P}_1$ and $\mathcal{P}_2$ the sets defined in analogy to what done in Section~\ref{s:prelim}. The same argument as in Proposition~\ref{p:P1P2} allows us to prove the theorem by showing that 
 $\mathcal{P}_1$ and $\mathcal{P}_2$ are residual. 
 
 Notice that the condition defining the set $\mathcal{P}_1$ actually depends only on $Q(\Omega)$, and not on $Q(\Gamma_D^g)$ and $\chi$. Hence, as proved in Lemma~\ref{l:P1residual}, $\mathcal{P}_1$ is residual.

Let us focus on the set $\mathcal{P}_2$. It is crucial for our argument to notice that 
the analyticity of $V_0^{Q,\chi}$ with respect to $Q$ and $\chi$ still holds in the case of partial gates, as it can 
be seen by a straightforward adaptation of 
Proposition~\ref{p:analytic}. As a consequence, as it was done in 
Corollary~\ref{propagation}, it is sufficient to prove that the set $\mathcal{P}_2$ is nonempty. 
For that purpose we proceed by defining a suitable subclass of $\mathcal{P}$ in which we are able to prove the density of 
$\mathcal{P}_2$.

Indeed, consider 
$\widetilde  L>0$ such that $\widetilde L^2\not\in \pi^2 \mathbb{Q}$ and define $\widetilde \Omega=(0,\pi)\times (0,\widetilde L)$. 
Let us introduce the subclass 
$\widetilde{\mathcal{P}}$ of $\mathcal{P}$
defined by
$$\widetilde{\mathcal{P}}=\{(Q(\Omega),Q(\Gamma_D^g),\chi)\in\mathcal{P}\mid 
Q(\Omega)=\widetilde  \Omega,\ Q(\Gamma_D^g)\subset [0,\pi]\times\{\widetilde  L\}\}.$$
Denote by $(\phi_{\bf j})_{{\bf j}\in\mathbb{N}^2}$ an $L^2$-orthonormal basis for the Laplace--Dirichlet operator on $\widetilde  \Omega$. 

Let $n\in \mathbb{N}$ be even, $\chi_n$ be defined as in \eqref{chin} (see Proposition~\ref{Sinfty}) and let 
$$
\overline S=S(Q(\Omega),Q(\Gamma_D^g),\chi_{n}).
$$
The intersection of $\mathcal{P}_2$ with $\widetilde{\mathcal{P}}$ 
contains in particular those elements $(Q(\Omega),Q(\Gamma_D^g),\chi)\in\widetilde{\mathcal{P}}$ such that
$\overline S\subset S(Q(\Omega),Q(\Gamma_D^g),\chi)$, i.e., 
$$\mathcal{P}_2\cap \widetilde{\mathcal{P}}\supset \bigcap_{({\bf j, k})\in \overline S}\mathcal{O}_{\bf jk}$$
where, for every ${\bf j, k}\in \mathbb{N}^2$,  
$$\mathcal{O}_{\bf jk}=\{(Q(\Omega),Q(\Gamma_D^g),\chi)\in\widetilde{\mathcal{P}}\mid \int_{\widetilde \Om} V_0^{Q,\chi}\phi_{\bf j}\phi_{\bf k}\ne 0\}.$$

Clearly, each $\mathcal{O}_{\bf jk}$ is open in $\widetilde{\mathcal{P}}$. 
The proof of the theorem is concluded by showing that 
$\mathcal{O}_{\bf jk}$ is dense for every $({\bf j, k})\in \overline S$.
Actually, we just need to prove that for every $({\bf j, k})\in \overline S$ there exists an element $(Q^{\bf j, k}(\Omega),Q^{\bf j, k}(\Gamma_{p,D}^g),\chi^{\bf j, k})$ in  $\mathcal{O}_{\bf jk}$: indeed, any other  element of $\widetilde{\mathcal{P}}$ can be connected to $(Q^{\bf j, k}(\Omega),Q^{\bf j, k}(\Gamma_{p,D}^g),\chi^{\bf j, k})$ by an analytic path within $\widetilde{\mathcal{P}}$, along which 
$V_0^{Q,\chi}$ varies analytically (while $\phi_{\bf j}$ and $\phi_{\bf k}$ do not vary at all). In particular, almost every element of the path is in 
$\mathcal{O}_{\bf jk}$, whence the density of $\mathcal{O}_{\bf jk}$ in $\widetilde{\mathcal{P}}$.

Let us introduce a sequence  
 $(\Gamma_{p,D}^g)_{p\in\mathbb{N}}$
of segments 
included in $(0,\pi)\times \{\widetilde  L\}$
increasing for the inclusion and such that
$$
\bigcup_{p=1}^{+\infty}\Gamma_{p,D}^g=(0,\pi)\times \{\widetilde  L\}.
$$
For every $p\in \mathbb{N}$ let 
$Q_p\in \mathrm{Diff}^1_0$ be such that $Q_p(\Om)=\widetilde  \Om$ and $Q_p(\Gamma_D^g)=\Gamma_{p,D}^g$, and 
$$\eta_p= \chi_n\Big|_{\Gamma_{p,D}^g}\in \mathcal{C}^1(\Gamma_{p,D}^g).$$

The following continuity result holds true and concludes the proof of the theorem.
\begin{lemma}\label{propCont}
Define $V_0^p=V_0^{Q_p,\eta_p}$.  
The sequence $(V^{p}_0)_{p\in\mathbb{N}}$ converges strongly in $H^1(\widetilde  \Omega)$ to $V^{\Id,\chi_n}_0$ as $p\to+\infty$. 
\end{lemma}

By a slight notational abuse we denote by $\chi_n$ its extension   on $\widetilde  \Om$ satisfying $\chi_n(x_1,x_2)=\chi_n(x_1,\widetilde  L)$ for every $(x_1,x_2)\in \widetilde  \Om$. 
Let us introduce the lift $W_p=V^{p}_0-\chi_n$. Thus, $W_p$ is the solution of the following partial differential equation
\begin{equation}\label{defu}
\left\{\begin{aligned}
-\Delta W_p(x) &=n^2 \chi_n(x),\quad && x\in (0,\pi)\times (0,\widetilde  L),\\
W_p(x)&=0, && x\in \Gamma_{p,D}^g\cup  (\{0,\pi\}\times [0,\widetilde  L]),\\
\frac{\partial W_p}{\partial \nu }(x)&=0, && x\in ([0,\pi]\times \{0\}) \cup (([0,\pi]\times \{\widetilde  L\})\backslash \Gamma_{p,D}^g),
\end{aligned}
\right.
\end{equation}
whose  variational formulation is written as follows: find $W_p$ in 
$$\mathcal{V}_p=\left\{v\in H^1(\Omega)\mid v=0\mbox{ on }\Gamma_{p,D}^g\cup (\{0,\pi\}\times [0,\widetilde  L])\right\}$$ such that for every $v\in\mathcal{V}_p$, one has
\begin{equation}\label{FVup}
\int_{\widetilde  \Omega} \nabla W_p(x)\cdot \nabla v(x)dx=n^2\int_{\widetilde  \Omega} v(x)\chi_n(x)dx.
\end{equation}
By definition, each $V^{p}_0$ is harmonic and  reaches its maximal and minimal values on the boundary of $\widetilde  \Omega$ at some points where the normal derivative of $V^{p}_0$ does not vanish, as it follows from the Hopf maximum principle. 
Thus, $\displaystyle \Vert V^{p}_0\Vert_{L^\infty(\widetilde  \Omega)}\leq \max_{x_1\in [0,\pi]}|\chi_n(x)|\leq \cosh (n\widetilde  L)$. As a consequence, the sequences $(V^{p}_0)_{p\in\mathbb{N}}$ and $(W_p)_{p\in\mathbb{N}}$ are uniformly bounded (with respect to $p$) in $L^2(\widetilde  \Omega)$. Taking now $v=W_p$ in \eqref{FVup} yields
$$
\Vert \nabla W_p\Vert_{L^2(\widetilde  \Omega)}^2\leq n^2 \Vert \chi_n\Vert_{L^2(\widetilde  \Omega)}\Vert W_p\Vert_{L^2(\widetilde  \Omega)}.
$$
The sequence $(W_p)_{p\in\mathbb{N}}$ is thus bounded in $H^1(\widetilde  \Omega)$ and, from Rellich compactess embedding theorem, converges up to a subsequence weakly in $H^1(\widetilde  \Omega)$ and strongly in $L^2(\widetilde  \Omega)$ to some $W_\infty\in H^1(\widetilde  \Omega)$. In the sequel, we will still denote by $(W_p)_{p\in\mathbb{N}}$ the considered subsequence. Taking tests functions $v$ in $\mathcal{C}^\infty(\overline{\widetilde  \Omega})$ with compact support in \eqref{FVup} yields immediately that $W_\infty$ satisfies
$$
-\Delta W_\infty=n^2 \chi_n
$$
in distributional sense. By compactness of the trace operator, one has necessarily $W_\infty=0$ on $\{0,\pi\}\times [0,\widetilde  L]$. 
Since the sequence $(\Gamma^g_{p,D})_{p\in\mathbb{N}}$  is increasing for the inclusion and converges to $(0,\pi)\times \{\widetilde L\}$, one sees that for any compact $K\subset (0,\pi)\times \{\widetilde L\}$ there exists $p_0$ such that  $W_p=0$ on $K$ for every $p\geq p_0$. Thus, one yields $W_{\infty}=0$ on $(0,\pi)\times \{\widetilde L\}$.
Finally, since $\mathcal{V}_p$ is increasing with respect to $p$, it is obvious that for every $p\in\mathbb{N}$ and $v\in\mathcal{V}_p$, $W_{\infty}$ satisfies
\begin{equation}\label{FVustar}
\int_{\widetilde \Omega} \nabla W_{\infty}(x)\cdot \nabla v(x)dx=n^2\int_{\widetilde \Omega} v(x)\chi_{\infty}(x)dx.
\end{equation}
Introduce $\mathcal{V}_\infty = \bigcup_{p=0}^{+\infty}\mathcal{V}_P$, that is,
$$
\mathcal{V}_\infty=\left\{v\in H^1(\Omega)\mid v=0\mbox{ on }((0,\pi)\times \{\widetilde L\})\cup   (\{0,\pi\}\times [0,\widetilde  L])\right\}.
$$
It is clear that $W_{\infty}$ satisfies \eqref{FVustar} for every $v\in\mathcal{V}_\infty$. By taking $v=W_p$ in \eqref{FVup} and since $(W_p)_{p\in\mathbb{N}}$ converges strongly in $L^2(\widetilde \Omega)$ to $W_{\infty}$, it follows that $\Vert W_p\Vert_{H^1(\widetilde \Omega)}$ converges to $\Vert W_{\infty}\Vert_{H^1(\widetilde \Omega)}$ as $p\to +\infty$. Since $(W_p)_{p\in\mathbb{N}}$ also converges weakly in $H^1(\widetilde \Omega)$ to $W_{\infty}$, we deduce that this convergence is in fact strong in $H^1(\widetilde \Omega)$, whence the result.
\end{proof}

\subsection{Nonlinear coupling}\label{s:nonlinear}

In this section, we show how the approximate controllability results proved in the previous sections can be applied to obtain some suitable controllability property for a nonlinear system. We now take into account selfconsistent electrostatic interactions between electrons in the Poisson equation. For simplicity, we only consider the case where the gate covers the entire upper side of the domain $\Omega$.

We consider here the following Schr\"odinger--Poisson system,
\begin{equation}\label{schropois}
\left\{\begin{aligned}
i\partial_t\psi (t,x)&= -\Delta \psi (t,x)+V(t,x)\psi(t,x),\quad && (t,x)\in  \mathbb{R}_+\times\Omega, \\
-\Delta V(t,x) &= \alpha|\psi(t,x)|^2, && (t,x)\in  \mathbb{R}_+\times\Omega, \\
\psi (t,x)&=0, && (t,x)\in \mathbb{R}_+\times \partial \Omega,\\
V (t,x)&=V_g(t)\chi(x), && (t,x)\in \mathbb{R}_+\times \Gamma_D^g,\\
V (t,x)&=0, && (t,x)\in \mathbb{R}_+\times \Gamma_D^s\cup \Gamma_D^d,\\
\frac{\partial V}{\partial \nu }(t,x)&=0, && (t,x)\in \mathbb{R}_+\times \Gamma_N.
\end{aligned}\right.
\end{equation}
Here, $\alpha>0$  denotes a dimensionless parameter that quantifies the strength of nonlinear effects; $1/\sqrt{\alpha}$ is the so-called \emph{scaled Debye length}. The domain $\Omega$ is the rectangle $(0,\pi)\times (0,L)$ in the configuration of Figure \ref{figTransistor}: the gate is the entire segment $\Gamma_D^g=[0,\pi]\times \{L\}$ and the Neumann boundary is $\Gamma_N=[0,\pi]\times \{0\}$.

In order to exploit elliptic regularity properties, we consider here smoother perturbation parameters than in previous sections, taking the diffeomorphism $Q$ in $\mathrm{Diff}^2_0$ (the class of $\mathcal{C}^2$ orientation-preserving diffeomorphism of $\R^2$)
and $\chi\in \mathcal{C}^1_0(Q(\Gamma_D^g))$.

It is convenient to split the potential into the sum of the control potential and the nonlinear potential as $V(t,x)=V_g(t)V_0(x)+W_\psi(t,x)$. The resulting equation in the deformed domain can be written as
\begin{equation}\label{schropois2}
i\partial_t\psi = -\Delta \psi +V_g(t)V_0\psi+W_\psi\psi,\qquad \mbox{in }\mathbb{R}_+\times Q(\Omega),
\end{equation}
where $V_0$ and $W_\psi$ are the solutions of
\begin{equation}\label{defV0-nonlinear}
\left\{\begin{aligned}
-\Delta V_0(x) &=0, && x\in Q(\Omega),\\
V_0(x)&=\chi(x), && x\in Q(\Gamma_D^g),\\
V_0 (x)&=0, && x\in Q(\Gamma_D^s\cup \Gamma_D^d),\\
\frac{\partial V_0}{\partial \nu }(x)&=0, && x\in Q(\Gamma_N),
\end{aligned}
\right.
\end{equation}
and
\begin{equation}\label{nonlinear-s}
\left\{\begin{aligned}
-\Delta W_\psi(t,x) &=\alpha|\psi(t,x))|^2, && (t,x)\in \mathbb{R}_+\times Q(\Omega), \\
W_\psi(t,x)&=0, && (t,x)\in \mathbb{R}_+\times Q(\Gamma_D^g\cup \Gamma_D^s \cup \Gamma_D^d),\\
\frac{\partial W_\psi}{\partial \nu }(t,x)&=0, && (t,x)\in \mathbb{R}_+\times  Q(\Gamma_N). 
\end{aligned}
\right.
\end{equation}

Before stating our approximate controllability result for \eqref{schropois2}, we address the question of well-posedness of this Cauchy problem. Two kinds of results are available for Schr\"odinger--Poisson systems, see \cite{cazenave}. In the whole-space case $\Omega=\R^2$, Strichartz estimates enable to benefit from the dispersive and smoothing properties of the Schr\"odinger group and construct a unique global $L^2$ solution to the problem \cite{GV,HO,castella}. In a general domain $\Omega$, for more regular initial data in $H^1$ or $H^2$, the analysis is simpler and the proof of global well-posedness can rely on energy estimates, see \cite{arnold,BM,IZL,MRS}. 

However, none of these results apply to our situation, which requires a specific study. Indeed, dealing with a problem set on general bounded domains $Q(\Omega)$, our analysis cannot rely on Strichartz estimate and we have to assume that the Cauchy data are more regular than $L^2$, for instance that they belong to the energy space $H^1$. In this case, the proof of local in time existence and uniqueness of a solution to \eqref{schropois2} is not a difficult task and the main issue is the question of {\em global} existence. As we said above, the proof of global existence usually relies on an energy estimate for \eqref{schropois2}, \eqref{defV0-nonlinear}, \eqref{nonlinear-s}. When the applied potential $V_g(t)$ is differentiable, this estimate can be obtained by multiplying \eqref{schropois2} by $\partial_t \overline \psi$ and integrating on $[0,t]\times Q(\Omega)$, and reads
\begin{align*}
&\|\nabla \psi(t)\|_{L^2(Q(\Omega))}^2+\frac{1}{2\alpha}\|\nabla W_\psi(t)\|^2+V_g(t)\int_{Q(\Omega)}V_0(x)|\psi(t,x)|^2\, dx\\
&\qquad \qquad \quad =\|\nabla \psi_0\|_{L^2(Q(\Omega))}^2+\frac{1}{2\alpha}\|\nabla W_{\psi_0}\|^2+V_g(0)\int_{Q(\Omega)}V_0(x)|\psi_0(x)|^2\, dx\\
&\qquad \qquad \qquad +\int_0^t \int_{Q(\Omega)}\partial_tV_g(s)V_0(x)|\psi(s,x)|^2\,ds dx.
\end{align*}

For completeness, we consider in the following nonsmooth control functions $V_g\in L^\infty([0,T],[0,\delta])$, where $\delta>0$ is given: for instance, $V_g$ can be piecewise constant. In the general case, we follow another path to prove that the energy of the system --\,say the $H^1$ norm of $\psi$\,-- remains bounded on any $[0,T]$, independently of the derivative of the control. We state this result in the following proposition, whose proof is based on a Br\'ezis--Gallouet type argument \cite{brezis-gallouet}.

\begin{proposition}
\label{propCauchy}
Let $T>0$, let $Q\in \mathrm{Diff}^2_0$, let $\chi\in {\mathcal C}^1_0(Q(\Gamma^g_D))$, and let $V_g\in L^\infty([0,T],[0,\delta])$. Then, for every $\psi_0\in H^1_0(Q(\Omega))$,  the system \eqref{schropois2}, \eqref{defV0-nonlinear}, \eqref{nonlinear-s} admits a unique mild solution $\psi\in \mathcal{C}^0([0,T],H^1_0(Q(\Omega)))$ and there exists $c>0$ such that, for all $t\in [0,T]$,
\begin{equation}
\label{apriori}
\|\psi(t,\cdot)\|_{H^1}\leq \exp\left(ce^{ct}\right).
\end{equation}
The constant $c$ only depends on $\delta$, $\alpha_0$, $Q$, $\|\psi_0\|_{H^1}$ and $\|\chi\|_{\mathcal C^1}$.
\end{proposition}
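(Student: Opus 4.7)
My plan is to construct the solution locally in $H^1_0(Q(\Omega))$ by Picard iteration on the Duhamel formula and then to close a global $H^1$ bound via a Br\'ezis--Gallouet logarithmic estimate applied to $\nabla W_\psi$, after first regularising the non-smooth control $V_g$.

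For local existence the key point is that, in dimension two, the map $\psi\mapsto W_\psi\psi$ is locally Lipschitz on $H^1_0(Q(\Omega))$. Indeed, standard elliptic regularity applied to \eqref{nonlinear-s} gives $\|W_\psi\|_{W^{2,p}}\leq C_p\,\||\psi|^2\|_{L^p}\leq C_p\|\psi\|_{H^1}^2$ for every $p<\infty$ (using $H^1\hookrightarrow L^{2p}$), so that $W_\psi\in W^{1,\infty}$; the product bound
$$\|W_\psi\psi\|_{H^1}\leq \|W_\psi\|_{L^\infty}\|\nabla\psi\|_{L^2}+\|\nabla W_\psi\|_{L^\infty}\|\psi\|_{L^2}$$
and the analogous estimate on differences turn the Duhamel map into a contraction on some short interval $[0,T^\star]$. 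The control term $V_g V_0\psi$ is controlled by $\delta\|V_0\|_{\mathcal{C}^1}$, which is finite because $Q\in\mathrm{Diff}^2_0$ and $\chi\in \mathcal{C}^1_0(Q(\Gamma_D^g))$ ensure, by elliptic regularity on the deformed domain, that $V_0\in \mathcal{C}^1(\overline{Q(\Omega)})$. This produces a unique mild solution in $\mathcal{C}^0([0,T^\star],H^1_0)$, with $T^\star$ depending only on $\|\psi_0\|_{H^1}$ and the parameters of the statement; mass conservation $\|\psi(t)\|_{L^2}=\|\psi_0\|_{L^2}=:m_0$ is immediate from the real-valuedness of the full potential.

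To establish the a priori estimate \eqref{apriori}, I would approximate $V_g$ by smooth $V_g^n$ with $\|V_g^n\|_{L^\infty}\leq\delta$ and $V_g^n\to V_g$ in $L^1([0,T])$, and work with the corresponding smooth solutions $\psi^n$. Differentiating in time,
$$\frac{d}{dt}\|\nabla\psi^n\|_{L^2}^2=2\operatorname{Im}\int_{Q(\Omega)}\nabla V\cdot\psi^n\,\nabla\overline{\psi^n},\qquad \nabla V=V_g^n\nabla V_0+\nabla W_{\psi^n}.$$
The $V_0$-contribution is bounded by $C\delta\,m_0\,\|\nabla\psi^n\|_{L^2}$ uniformly in $n$. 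The delicate contribution is the one involving $\nabla W_{\psi^n}$, and it is here that the Br\'ezis--Gallouet inequality enters: a two-dimensional logarithmic interpolation of the form
$$\|\nabla W_{\psi^n}\|_{L^\infty}\leq C\,\|W_{\psi^n}\|_{H^2}\,\Bigl(1+\sqrt{\log\bigl(e+\|W_{\psi^n}\|_{W^{2,p}}/\|W_{\psi^n}\|_{H^2}\bigr)}\Bigr),$$
for some $p>2$, combined with the elliptic bounds $\|W_{\psi^n}\|_{H^2}\leq C\|\psi^n\|_{L^4}^2\leq Cm_0\|\nabla\psi^n\|_{L^2}$ (Gagliardo--Nirenberg) and $\|W_{\psi^n}\|_{W^{2,p}}\leq C_p\|\psi^n\|_{H^1}^2$, converts the energy identity into a differential inequality
$$\frac{d}{dt}\|\nabla\psi^n\|_{L^2}^2\leq c_1+c_2\|\nabla\psi^n\|_{L^2}^2\sqrt{\log\bigl(e+\|\nabla\psi^n\|_{L^2}^2\bigr)},$$
with $c_1,c_2$ independent of $n$ and depending only on $\delta$, $\alpha$, $Q$, $\|\psi_0\|_{H^1}$ and $\|\chi\|_{\mathcal{C}^1}$. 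The classical Br\'ezis--Gallouet Gronwall argument (setting $z=\log(e+\|\nabla\psi^n\|_{L^2}^2)$, one reduces to $(\sqrt{z})'\leq c$) then yields the double-exponential bound $\|\nabla\psi^n(t)\|_{L^2}\leq \exp(c e^{ct})$ for all $t\in[0,T]$.

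Once this uniform estimate is available, the equation itself gives $\partial_t\psi^n$ bounded in $L^\infty([0,T],H^{-1})$, so Aubin--Lions produces a subsequence converging strongly in $\mathcal{C}^0([0,T],L^2)$ and weakly-$*$ in $L^\infty([0,T],H^1_0)$ to some $\psi$; passing to the limit in the Duhamel identity shows that $\psi$ is a mild solution of \eqref{schropois2}, and uniqueness from the local-existence argument (iterated on successive short intervals) identifies the full sequence and transfers \eqref{apriori} to $\psi$. I expect the main obstacle to lie precisely in justifying the Br\'ezis--Gallouet step with constants independent of $n$ under the mixed Dirichlet--Neumann conditions of \eqref{nonlinear-s}; this is also where the $\mathcal{C}^2$ regularity of $Q$ is essential, since it is needed to apply elliptic regularity up to the boundary to the Poisson problem defining $W_{\psi^n}$ on the deformed domain.
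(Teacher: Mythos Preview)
Your strategy is sound and would yield the result, but it diverges from the paper's proof in two substantive ways, and in both places the paper's route is lighter.

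\textbf{No regularisation of $V_g$.} The paper never approximates $V_g$ by smooth controls and never differentiates $\|\nabla\psi\|_{L^2}^2$ in time. Instead it works directly with the Duhamel representation: since $e^{it\Delta}$ is unitary on $H^1_0$, the mild formulation immediately gives an \emph{integral} inequality
\[
\|\nabla\psi(t)\|_{L^2}\leq \|\nabla\psi_0\|_{L^2}+\int_0^t\bigl(\|\nabla(V_gV_0\psi)(s)\|_{L^2}+\|\nabla(W_\psi\psi)(s)\|_{L^2}\bigr)\,ds,
\]
valid for $V_g\in L^\infty$ with no further justification. This removes your entire approximation/Aubin--Lions layer, and also avoids your claim $V_0\in\mathcal{C}^1(\overline{Q(\Omega)})$, which is delicate at the corners where the boundary condition changes type; the paper only uses $V_0\in H^{3/2}$, which suffices for the product estimate $\|\nabla(V_0\psi)\|_{L^2}\leq C\|V_0\|_{H^{3/2}}\|\nabla\psi\|_{L^2}$ via $H^{3/2}\hookrightarrow W^{1,4}\cap L^\infty$.

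\textbf{Br\'ezis--Gallouet on $W_\psi$, not on $\nabla W_\psi$.} You control $\|\nabla W_\psi\|_{L^\infty}$ by a logarithmic interpolation involving $\|W_\psi\|_{W^{2,p}}$ for some $p>2$; this requires $W^{2,p}$ elliptic regularity for the mixed problem on the deformed domain with corners, which is true here but needs an argument. The paper instead applies the classical Br\'ezis--Gallouet inequality \emph{to $W_\psi$ itself}, so only $H^2$ regularity is used. The twist is a bootstrap with the Poisson energy identity: multiplying \eqref{nonlinear-s} by $W_\psi$ gives $\|\nabla W_\psi\|_{L^2}^2\leq \alpha\|W_\psi\|_{L^\infty}\|\psi_0\|_{L^2}^2$, and feeding this back into the Br\'ezis--Gallouet bound yields a quadratic inequality in $\|W_\psi\|_{L^\infty}$ whose solution is
\[
\|W_\psi\|_{L^\infty}\leq C\bigl(1+\alpha\|\psi_0\|_{L^2}^2\log(1+\sqrt{\alpha}\,\|\nabla\psi\|_{L^2})\bigr).
\]
Combined with the elementary splitting $\|\nabla(W_\psi\psi)\|_{L^2}\leq \|\nabla W_\psi\|_{L^4}\|\psi\|_{L^4}+\|W_\psi\|_{L^\infty}\|\nabla\psi\|_{L^2}$ (the first term being linear in $\|\nabla\psi\|_{L^2}$ by Gagliardo--Nirenberg), one lands directly on a logarithmic Gronwall inequality and the double exponential.

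In short: your argument would work, but the paper's approach buys you a proof without regularising the control, without compactness, and with only the $H^2$ elliptic estimate for $W_\psi$.
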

\begin{proof}
Let us first prove the local well-posedness of the Cauchy problem in $H^1_0(Q(\Omega))$. For all $T_0>0$, we set $X_{T_0}={\mathcal C}^0([0,T_0],H^1_0(Q(\Omega)))$ with the norm
$$\|u\|_{X_{T_0}}=\max_{t\in[0,T_0]}\|\nabla u(t)\|_{L^2}.$$
Denoting by $(e^{i\tau \Delta})_{\tau\in \R}$ the group of unitary  transformations generated by the operator $i\Delta$ with Dirichlet boundary conditions, a mild solution $\psi\in X_{T_0}$ of \eqref{schropois2} satisfies
 \begin{align}
 \label{mild}
 \psi(t,\cdot)=e^{-i t \Delta}\psi_0(\cdot)+\int_0^t e^{-i (t-s) \Delta}(V_g(s) V_0(\cdot)+W_\psi(s,\cdot))\psi(s,\cdot)ds,
 \end{align}
where $V_0$ and $W_\psi$ are defined by \eqref{defV0-nonlinear} and  \eqref{nonlinear-s}, and can be characterized as a fixed-point of the mapping $S\,:\,X_{T_0}\to X_{T_0}$ given by
$$
S(\psi)=e^{-i t \Delta}\psi_0(\cdot)+\int_0^t e^{-i (t-s) \Delta}(V_g(s) V_0(\cdot)+W_\psi(s,\cdot))\psi(s,\cdot)ds.
$$
Let $R> \|\nabla \psi_0\|_{L^2}$ be fixed and define
$${\mathcal B}_R=\{u\in X_{T_0}\,:\,\|u\|_{X_{T_0}}\leq R\}.$$
We will prove that, for $T_0$ small enough, $S$ is a contraction mapping on ${\mathcal B}_R$. 

By elliptic regularity, since the function $\chi$ belongs to $\mathcal C^1$ and vanishes at the boundary of the grid, the fixed potential $V_0$ which solves \eqref{defV0-nonlinear} belongs (at least) to $H^{3/2}(Q(\Omega))$. 
Denoting in the following by $C$ any positive constant depending only on the domain $Q(\Omega)$, the Sobolev embeddings $H^{3/2}\hookrightarrow W^{1,4}$, $H^1\hookrightarrow L^4$, $H^{3/2}\hookrightarrow L^\infty$ and the Poincar\'e inequality yield
\begin{align}
\|\nabla (V_0\psi)\|_{L^2}&\leq \|(\nabla V_0)\psi\|_{L^2}+\|V_0 \nabla \psi\|_{L^2}\leq \|\nabla V_0\|_{L^4}\|\psi\|_{L^4}+\|V_0\|_{L^\infty} \|\nabla\psi\|_{L^2}\nonumber\\&\leq C\|V_0\|_{H^{3/2}} \|\nabla\psi\|_{L^2}\,.\label{V0psi}
\end{align}
By elliptic regularity and Sobolev embeddings, we have for all $\psi,\,\widetilde \psi \in H^1_0(\Omega)$
\begin{align}
\|W_\psi-W_{\widetilde \psi}\|_{H^2}&\leq C\alpha \||\psi|^2-|\widetilde \psi|^2\|_{L^2}\leq C\alpha (\|\psi\|_{L^4}+\| \widetilde \psi\|_{L^4})\|\psi-\widetilde \psi\|_{L^4}\nonumber\\
&\leq C\alpha (\|\nabla \psi\|_{L^2}+\|\nabla \widetilde \psi\|_{L^2})\|\nabla (\psi-\widetilde \psi)\|_{L^2},\label{ellip}
\end{align}
so, proceeding as for \eqref{V0psi}, we get
\begin{align*}
\|\nabla (W_\psi \psi-W_{\widetilde \psi} \widetilde \psi)\|_{L^2}\leq C \alpha (\|\nabla \psi\|_{L^2}^2+\|\nabla \widetilde \psi\|_{L^2}^2)\|\nabla(\psi-\widetilde \psi)\|_{L^2}\,.
\end{align*}
Finally, using that $e^{-i t \Delta}$ is unitary on $H^1_0(Q(\Omega))$, we obtain, for all $\psi,\,\widetilde \psi \in {\mathcal B}_R$
\begin{align*}
\|S(\psi))\|_{X_{T_0}}&\leq \|\nabla\psi_0\|_{L^2}+\int_0^{T_0}|V_g(s)|\left(\|\nabla(V_0\psi)(s)\|_{L^2}+\|\nabla (W_\psi \psi)(s)\|_{L^2}\right)ds\\
&\leq \|\nabla\psi_0\|_{L^2}+CT_0\left(\|V_g\|_{L^\infty}\|V_0\|_{H^{3/2}}R+\alpha R^3\right)
\end{align*}
and
\begin{align*}
\|S(\psi)-S(\widetilde \psi)\|_{X_{T_0}}&\leq \int_0^{T_0}|V_g(s)|\left(\|\nabla(V_0(\psi-\widetilde \psi))(s)\|_{L^2}+\|\nabla (W_\psi \psi-W_{\widetilde \psi} \widetilde \psi)(s)\|_{L^2}\right)ds\\
&\leq CT_0\left(\|V_g\|_{L^\infty}\|V_0\|_{H^{3/2}}+2\alpha R^2\right)\|\psi-\widetilde \psi\|_{X_{T_0}}.
\end{align*}
Hence, it is clear that, since $ \|\nabla \psi_0\|_{L^2}<R$, choosing $T_0$ small enough ensures $S(\psi)\in {\mathcal B}_R$ and $\|S(\psi)-S(\widetilde \psi)\|_{X_{T_0}}<q\|\psi-\widetilde \psi\|_{X_{T_0}}$ with $q<1$. Then, the Banach fixed-point theorem implies the existence of a unique mild solution to \eqref{schropois2} on the time interval $[0,T_0]$. Furthermore, if the \emph{a priori} estimate \eqref{apriori} is proved, then by a standard continuation argument, the existence interval can be taken equal to $[0,T]$, which means that the solution is in fact global in time.

\bigskip
Let us now prove the crucial estimate \eqref{apriori}. We first recall that the $L^2$ norm of $\psi$ is an invariant of \eqref{schropois2}: for all $t\geq 0$, one has $\|\psi(t)\|_{L^2}=\|\psi_0\|_{L^2}$. To estimate the $H^1_0$ norm of $\psi$, we come back to \eqref{mild} which yields
\begin{align}
\|\nabla \psi(t)\|_{L^2}&\leq \|\nabla \psi_0\|_{L^2}+\|V_g\|_{L^\infty }\int_0^t \|\nabla (V_0 \psi)(s)\|_{L^2}ds+\int_0^t \|\nabla (W_\psi \psi)(s)\|_{L^2}ds\nonumber\\
&\leq \|\nabla \psi_0\|_{L^2}+C\|V_g\|_{L^\infty}\|V_0\|_{H^{3/2}}\int_0^t \|\nabla \psi(s)\|_{L^2}ds+\int_0^t \|\nabla (W_\psi \psi)(s)\|_{L^2}ds,
\label{est1}
\end{align}
where we used \eqref{V0psi}. We thus need to estimate the product
\begin{equation}
\label{prod}
\|\nabla (W_\psi \psi)\|_{L^2}\leq \|(\nabla W_\psi) \psi\|_{L^2}+\|W_\psi \nabla \psi\|_{L^2}\leq \|\nabla W_\psi\|_{L^4}\|\psi\|_{L^4}+\|W_\psi\|_{L^\infty}\|\nabla \psi\|_{L^2}\,.
\end{equation}
For the first term, we use elliptic regularity and Sobolev embedding,
$$\|\nabla W_\psi\|_{L^4}\leq C\|W_\psi\|_{W^{2,4/3}}\leq C\alpha \||\psi|^2\|_{L^{4/3}}= C\alpha \|\psi\|_{L^{8/3}}^2.$$
Next, we recall the following two Gagliardo--Nirenberg inequalities: for all $\psi\in H^1_0(Q(\Omega))$, one has
$$\|\psi\|_{L^{8/3}}^2\leq \|\nabla \psi\|_{L^2}^{1/2}\|\psi\|_{L^2}^{3/2},\qquad 
\|\psi\|_{L^{4}}\leq \|\nabla \psi\|_{L^2}^{1/2}\|\psi\|_{L^2}^{1/2}.$$
Hence, the first term in the right hand side of \eqref{prod} can be bounded linearly in $\|\nabla \psi\|_{L^2}$ as
\begin{equation}
\label{first}
\|\nabla W_\psi\|_{L^4}\|\psi\|_{L^4}\leq C\alpha \|\psi\|_{L^2}^2 \|\nabla \psi\|_{L^2}=C\alpha \|\psi_0\|_{L^2}^2 \|\nabla \psi\|_{L^2}.
\end{equation}

The main source of concern is the second term in the right hand side of \eqref{prod}. Indeed, $\|W_\psi\|_{L^\infty}$ cannot be bounded by a quantity which only depends on the $L^2$ norm of $\psi$ (an $L^1$ right-hand side in the elliptic equation \eqref{nonlinear-s} does not produce an $L^\infty$ potential), so this term will necessarily lead to a super-linear estimate in $\|\nabla \psi\|_{L^2}$. 

A key inequality in the proof will be the following one, proved by Br\'ezis and Gallouet in \cite{brezis-gallouet}. There exists a constant $C>0$ such that, for all $u\in H^2(\Omega)$, one has
\begin{equation}
\label{BG}
\|u\|_{L^\infty}\leq C(1+\|u\|_{H^1}\sqrt{\log(1+\|u\|_{H^2})}).
\end{equation}

Let us multiply the first equation of \eqref{nonlinear-s} by $W_\psi$ and integrate on $Q(\Omega)$. After an integration by parts, it comes
\begin{equation}
\label{c}\|\nabla W_\psi\|^2_{L^2}=\alpha \int_{Q(\Omega)}W_\psi |\psi|^2 dx\leq \alpha \|W_\psi\|_{L^\infty}\|\psi_0\|_{L^2}^2.
\end{equation}
Therefore, from the Poincar\'e inequality, from \eqref{BG} and \eqref{c}, we deduce
$$\|W_\psi\|_{L^\infty}^2\leq C(1+\alpha \|W_\psi\|_{L^\infty}\|\psi_0\|_{L^2}^2\log(1+\|W_\psi \|_{H^2})),$$
from which we get
$$\|W_\psi\|_{L^\infty}\leq C(1+\alpha \|\psi_0\|_{L^2}^2\log(1+\|W_\psi \|_{H^2})).$$
Next, using \eqref{ellip} with $\widetilde\psi =0$, we obtain
\begin{equation}
\label{estiW}
\|W_\psi\|_{L^\infty}\leq C(1+\alpha \|\psi_0\|_{L^2}^2\log(1+\sqrt{\alpha}\|\nabla \psi \|_{L^2})).
\end{equation}

Finally, gathering \eqref{est1}, \eqref{prod}, \eqref{first} and \eqref{estiW}, one gets
\begin{align*}
\|\nabla \psi(t)\|_{L^2}&\leq \|\nabla \psi_0\|_{L^2}+C\|V_g\|_{L^\infty}\|V_0\|_{H^{3/2}}\int_0^t \|\nabla \psi(s)\|_{L^2}ds\\
& \quad +C(1+\alpha \|\psi_0\|_{L^2}^2)\int_0^t (1+\log(1+\sqrt{\alpha}\|\nabla \psi (s)\|_{L^2})) \|\nabla \psi(s)\|_{L^2}ds\\
&\leq \|\nabla \psi_0\|_{L^2}+C\delta\|\chi\|_{{\mathcal C}^1}\int_0^t \|\nabla \psi(s)\|_{L^2}ds\\
& \quad +C(1+\alpha \|\psi_0\|_{L^2}^2)\int_0^t (1+\log(1+\sqrt{\alpha_0}\|\nabla \psi (s)\|_{L^2})) \|\nabla \psi(s)\|_{L^2}ds
\end{align*}
where we used  $\|V_g\|_{L^\infty}\leq\delta$ and $\|V_0\|_{H^{3/2}}\leq C\|\chi\|_{{\mathcal C}^1}$. A logarithmic Gronwall lemma (see \cite{brezis-gallouet}) yields the \emph{a priori} estimate \eqref{apriori}. The proof of the proposition is complete.
\end{proof}

As application of this proposition, one deduces the following approximate controllability result for the nonlinear problem \eqref{schropois2}.
\begin{theorem}
For a generic triple 
$(Q(\Om),Q(\Gamma^D_g),\chi)$ in $\mathcal{P}$, defined as in \eqref{def-P}, for every $\psi_0\in H^1_0(Q(\Om),\C)$, $\psi_1\in L^2(Q(\Om),\C)$, with $\|\psi_0\|_{L^2}=\|\psi_1\|_{L^2}=1$, for every  tolerance $\varepsilon>0$, there exist a positive time $T$, a control
$V_g\in L^\infty([0,T],[0,\delta])$, and $\alpha_0>0$ such that, if
$0<\alpha\leq \alpha_0$, then the solution of  \eqref{schropois2} satisfies $\|\psi(T)-\psi_1\|_{L^2(Q(\Omega))}<\varepsilon$.
\end{theorem}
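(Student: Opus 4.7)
The strategy is to reduce the nonlinear controllability problem to the linear one by treating the self-consistent potential $W_\psi$ as a perturbation whose size is controlled by $\alpha$. Pick a triple $(Q(\Omega),Q(\Gamma_D^g),\chi)\in\mathcal{P}$ for which Theorem~\ref{grille-deformation} applies (the set of such triples is residual). Given $\psi_0\in H^1_0(Q(\Omega),\C)$, $\psi_1\in L^2(Q(\Omega),\C)$ with $\|\psi_0\|_{L^2}=\|\psi_1\|_{L^2}=1$ and a tolerance $\varepsilon>0$, apply Theorem~\ref{grille-deformation} to the \emph{linear} system ($\alpha=0$) to produce $T>0$ and $V_g\in L^\infty([0,T],[0,\delta])$ such that the corresponding linear evolution $\psi^{\mathrm{lin}}$ starting at $\psi_0$ satisfies $\|\psi^{\mathrm{lin}}(T)-\psi_1\|_{L^2}<\varepsilon/2$. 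Note that $T$ and $V_g$ are fixed by the linear problem, independently of $\alpha$.

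With this same control $V_g$ and initial datum $\psi_0\in H^1_0$, Proposition~\ref{propCauchy} produces a unique mild solution $\psi\in\mathcal{C}^0([0,T],H^1_0(Q(\Omega)))$ of the nonlinear system \eqref{schropois2}--\eqref{nonlinear-s} and, crucially, a bound
\[
\sup_{t\in[0,T]}\|\psi(t)\|_{H^1}\leq K(T),
\]
where $K(T)=\exp(c\,e^{cT})$ depends only on $\delta$, $\alpha_0$, $Q$, $\|\psi_0\|_{H^1}$ and $\|\chi\|_{\mathcal{C}^1}$, and in particular is \emph{uniform} for $\alpha\in(0,\alpha_0]$. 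The $L^2$ norm is preserved, so $\|\psi(t)\|_{L^2}=1$ for all $t$. Elliptic regularity applied to \eqref{nonlinear-s} (using $|\psi|^2\in L^p$ for any $p<\infty$ in two dimensions via Sobolev embedding $H^1\hookrightarrow L^{2p}$) yields $W_\psi\in W^{2,p}(Q(\Omega))\hookrightarrow L^\infty$ with
\[
\|W_\psi(t,\cdot)\|_{L^\infty}\leq C\alpha\,\|\psi(t)\|_{H^1}^2\leq C\alpha\,K(T)^2,
\]
for some constant $C$ depending only on the domain.

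Set $\eta=\psi-\psi^{\mathrm{lin}}$. Subtracting the two evolution equations gives
\[
i\partial_t\eta=-\Delta\eta+V_g(t)V_0\eta+W_\psi\psi,\qquad \eta(0)=0,
\]
with homogeneous Dirichlet boundary conditions. Since the operator $-\Delta+V_g(t)V_0$ is self-adjoint on $L^2$, taking the scalar product with $\eta$ and extracting the imaginary part yields
\[
\frac{d}{dt}\|\eta(t)\|_{L^2}^2=-2\,\mathrm{Im}\langle W_\psi\psi,\eta\rangle_{L^2},
\]
whence $\frac{d}{dt}\|\eta(t)\|_{L^2}\leq \|W_\psi(t)\|_{L^\infty}\|\psi(t)\|_{L^2}\leq C\alpha K(T)^2$. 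Integrating on $[0,T]$ yields $\|\eta(T)\|_{L^2}\leq C\alpha\,T\,K(T)^2$. Choosing $\alpha_0>0$ small enough that $C\alpha_0\,T\,K(T)^2<\varepsilon/2$, the triangle inequality gives $\|\psi(T)-\psi_1\|_{L^2}<\varepsilon$ for every $\alpha\in(0,\alpha_0]$.

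The only delicate point is step two: one must verify that the constant $K(T)$ coming from Proposition~\ref{propCauchy} is indeed uniform in $\alpha\in(0,\alpha_0]$. This is transparent from the proof of that proposition, where the logarithmic Gr\"onwall bound depends on $\alpha$ only through $\alpha_0$ (an upper bound fixed in advance); thus the chain of inequalities produces a constant that is monotonic in $\alpha_0$ and independent of $\alpha$ itself. Once this uniform $H^1$ bound is in hand, the elliptic $L^\infty$ control on $W_\psi$ and the $L^2$ Gr\"onwall estimate above close the argument in a straightforward way.
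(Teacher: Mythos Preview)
Your proof is correct and follows essentially the same strategy as the paper's: fix the control from the linear problem, use Proposition~\ref{propCauchy} for a uniform $H^1$ bound on the nonlinear solution, bound $\|W_\psi\|_{L^\infty}$ by $C\alpha$ times that $H^1$ bound squared, and conclude that $\|\psi(T)-\psi^{\mathrm{lin}}(T)\|_{L^2}=O(\alpha)$. The only cosmetic differences are that the paper uses the Duhamel formula $\psi=\psi^{\mathrm{lin}}+\int_0^t e^{-i(t-s)\Delta}W_\psi\psi\,ds$ directly rather than your energy estimate on $\eta$, and it resolves the apparent circularity in the choice of $\alpha_0$ by first fixing an auxiliary ceiling $\alpha_{\max}$ (so that $c$, hence $K(T)$, is frozen) and then taking $\alpha_0=\min(\alpha_{\max},\varepsilon/(2CTK(T)^2))$; you handle this correctly in your final paragraph, but making the two-step choice explicit would remove the notational ambiguity of using $\alpha_0$ both as the ``upper bound fixed in advance'' and as the quantity to be chosen at the end.
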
 
\begin{proof}
Recall that, by Theorem \ref{grille-deformation}, for a generic triple 
$(Q(\Om),Q(\Gamma^D_g),\chi)$ in $\mathcal{P}$, the linear system \eqref{Qschro1} is approximately controllable. Fix then $Q$ and $\chi$ such that \eqref{Qschro1} is approximately controllable.
Fix $\psi_0\in H^1_0(Q(\Om),\C)$, $\psi_1\in L^2(Q(\Om),\C)$, with $\|\psi_0\|_{L^2}=\|\psi_1\|_{L^2}=1$ and $\varepsilon>0$. 
Then there  exist  $T>0$ and 
$V_g\in L^\infty([0,T],[0,\delta])$ such that
the solution $\psi_{\mathrm{lin}}$ of the linear equation \eqref{Qschro1} with initial condition $\psi_0$ corresponding to 
$V_g$ satisfies $\|\psi_{\mathrm{lin}}(T)-\psi_1\|_{L^2}<\varepsilon/2$.

Then, the solution $\psi(t,x)$ of the nonlinear equation \eqref{schropois2} with initial condition $\psi_0$ corresponding to the control $V_g$ reads
 \begin{align}
 \psi(t,\cdot)&=e^{-i t \Delta}\psi_0(\cdot)+\int_0^t e^{-i (t-s) \Delta}(V_g(s) V_0(\cdot)+W_\psi(s,\cdot))\psi(s,\cdot)ds\nonumber\\
 &=\psi_{\mathrm{lin}}(t,\cdot)+\int_0^t e^{-i (t-s) \Delta}W_\psi(s,\cdot)\psi(s,\cdot)ds.\label{mild2}
 \end{align}
The $L^\infty$ norm of $W_\psi$ can be estimated by using elliptic regularity for \eqref{nonlinear-s}, a Sobolev embedding and the bound \eqref{apriori} given in Proposition \ref{propCauchy}: for all $t\leq T$,
\begin{align*}
\|W_\psi(t)\|_{L^\infty}&\leq C\|W_\psi(t)\|_{H^2}\leq \alpha C\||\psi(t)|^2\|_{L^2}= \alpha C\|\psi(t)\|_{L^4}^2\\
&\leq \alpha C\|\psi(t)\|_{H^1}^2\leq \alpha C \exp(ce^{cT}).
\end{align*}
Fixing an upper bound $\alpha_{\max}>0$ for $\alpha$,  the constant $c>0$ can be chosen independent of $\alpha$ and  depending only on $Q$, $\chi$, $\psi_0$ and $\delta$, which are all fixed. Hence, inserting this estimate in \eqref{mild2} yields
\begin{align*}
\|\psi(T)-\psi_{\mathrm{lin}}(T)\|_{L^2}\leq \int_0^t \|W_\psi(s,\cdot)\psi(s,\cdot)\|_{L^2}\,ds&\leq \int_0^t \|W_\psi(s,\cdot)\|_{L^\infty}\|\psi(s,\cdot)\|_{L^2}\,ds\\
&\leq \alpha C T  \exp(ce^{cT}),
\end{align*}
where we used $\|\psi\|_{L^2}=1$ and that $e^{-i \tau \Delta}$ preserves the $L^2$-norm. Then it suffices to take 
$$\alpha_0=\min\left(\alpha_{\max},\frac{\varepsilon}{2CT  \exp(ce^{cT})}\right)$$ and the theorem is proved.
\end{proof}

\medskip
\noindent{\bf Acknowledgment.}
The second author was partially supported by the ANR project OPTIFORM.\\

\def\cprime{$'$}

%
%

\end{document}